\tikzset{
    labl/.style={anchor=south, rotate=90, inner sep=.5mm}
}
\newtheorem{thm}{Theorem}
\newtheorem{thmintro}{Theorem}
\newtheorem{prop}[thm]{Proposition}
\newtheorem{lem}[thm]{Lemma}
\newtheorem{cor}[thm]{Corollary}
\theoremstyle{definition}
\newtheorem{rem}[thm]{Remark}
\newtheorem{remark}[thm]{Remark}
\numberwithin{equation}{section}
\numberwithin{thm}{section}
\newcommand{\I}{I}
\newcommand{\coroot}{\alpha^\vee}
\newcommand{\A}{\mathcal{A}}
\newcommand{\U}{\mathrm{U}}
\newcommand{\AU}{{_\mathcal{A}\dot{\U}}}
\newcommand{\Ui}{\U^\imath}
\newcommand{\Udot}{\dot{\U}}
\newcommand{\Uidot}{\dot{\U}^\imath}
\def\lr#1#2{\ensuremath{\left(\kern-.3em\left(\genfrac{}{}{0pt}{}{#1}{#2}\right)\kern-.3em\right)}}
\newcommand{\Qq}{\mathbb{Q}(q)}
\newcommand{\agUi}{\U^{\imath,+}}
\newcommand{{\BG}}{\ensuremath{\mathbb {G}}\xspace}
\newcommand{{\BK}}{\ensuremath{\mathbb {K}}\xspace}
\newcommand{\BQ}{\ensuremath{\mathbb {Q}}\xspace}
\newcommand{\BZ}{\ensuremath{\mathbb {Z}}\xspace}
\newcommand{\CA}{\ensuremath{\mathcal {A}}\xspace}
\newcommand{\RB}{\ensuremath{\mathrm {B}}\xspace}
\begin{document}

\title[]{Coordinate rings on symmetric spaces}

\subjclass[2020]{} 

\begin{abstract}
Let $G_k$ be a connected reductive group over an algebraically closed field $k$ of char $\neq 2$. Let $\theta_k$ be an algebraic group involution of $G_k$ and denote the fixed point subgroup by $K_k$. 
We construct an integral model for the symmetric space $K_k \backslash G_k$ with a natural action of the Chevalley group scheme over integers. We show the coordinate ring $k[K_k \backslash G_k]$ admits a canonical basis, as well as a good filtration as a $G_k$-module. We also construct a canonical basis and an integral form for the space of $K_k$-biinvariant functions on $k[G_k]$. Our results rely on the construction of quantized coordinate algebras of symmetric spaces, using the theory of canonical bases on quantum symmetric pairs.
\end{abstract}

\author[Huanchen Bao]{Huanchen Bao}
\address{Department of Mathematics, National University of Singapore, Singapore.}
\email{huanchen@nus.edu.sg}

\author[Jinfeng Song]{Jinfeng Song}
\address{Department of Mathematics, National University of Singapore, Singapore.}
\email{j\_song@u.nus.edu}

	\maketitle
\section{Introduction}

\subsection{}
     Let $G_k$ be a connected reductive group over an algebraically closed field $k$. We denote the coordinate ring of $G_k$ by $k[G_k]$. It is known \cite{Lu07} that  $k[G_k]$ admits an integral form ${}_\BZ\mathbf{O}$ that defines the Chevalley group scheme $G_\BZ$ over $\BZ$ parameterizing split reductive groups of the given type. Lusztig \cite{Lu07} gave a construction of  the ring ${}_\BZ \mathbf{O}$ using his theory of canonical bases on quantum groups.

     Assume char $k \neq 2$ and let $\theta_k$ be an algebraic group involution of the reductive group $G_k$. The classification of such involutions is independent of the characteristic of $k$ (provided $\neq 2$) by Springer \cite{Spr87}. 
     We denote the fixed point subgroup by $K_k$. In our previous work \cite{BS22}, we have constructed the symmetric subgroup scheme over $\BZ$ parameterizing such symmetric subgroups.
     
       We study the symmetric space $K_k \backslash G_k$ in this paper. Let $k[K_k \backslash G_k]$ be the coordinate ring. The first main result of this paper is the construction of an integral form of $k[K_k \backslash G_k]$, hence an integral model parameterizing the symmetric spaces $K_k \backslash G_k$ for various algebraically closed field $k$.

       \begin{thmintro}[Theorem~\ref{thm:base} \& Theorem~\ref{thm:KG}]\label{thm:1}
There exists a commutative ring ${}_\BZ \mathbf{O}(K \backslash G) \subset {}_\BZ \mathbf{O}$ over $\BZ$, such that ${}_k \mathbf{O}(K \backslash G) = k \otimes_\BZ {}_\BZ \mathbf{O}(K \backslash G) \cong k[K_k \backslash G_k]$ for any algebraically closed field of char $\neq 2$.    

There exists a natural basis $B(K \backslash G)$ on ${}_\BZ \mathbf{O}(K \backslash G)$ which we call the canonical basis.
       \end{thmintro}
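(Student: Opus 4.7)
The plan is to quantize the problem: construct a quantum analogue of $k[K_k\backslash G_k]$ sitting inside Lusztig's quantum coordinate algebra, equip it with an $\imath$canonical basis by means of the theory of canonical bases on quantum symmetric pairs (Bao--Wang), and then extract the sought-after integral form by specializing the parameter $q$ to $1$. This is the exact $\imath$-analogue of Lusztig's construction of the integral form ${}_\BZ\mathbf{O}$ of $k[G_k]$ recalled in the introduction.

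At the quantum level, let $\mathbf{O}_q$ be the restricted dual of $\Udot$ over $\BQ(q)$, viewed as a $\U\otimes\U^{\mathrm{op}}$-module with Lusztig's dual canonical basis, and let ${}_\CA\mathbf{O}_q$ be its $\CA$-form, with $\CA=\BZ[q,q^{\pm 1}]$. I would define a quantum coordinate ring $\mathbf{O}_q(K\backslash G)$ of the symmetric space as the locus inside $\mathbf{O}_q$ that is, in a suitable $q$-deformed sense, invariant under the left action of $\Ui$; on the Peter--Weyl side this is
\[
\mathbf{O}_q(K\backslash G)\;\cong\;\bigoplus_\lambda \bigl(V(\lambda)^{\Ui}\bigr)^{*}\otimes V(\lambda),
\]
which quantizes $\BC[K\backslash G]$ in characteristic zero. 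The Bao--Wang $\imath$canonical basis of each $V(\lambda)$ is compatible with the canonical basis; dualizing, it picks out a distinguished subset $B(K\backslash G)_\CA$ of the dual canonical basis of $\mathbf{O}_q$, and I would take ${}_\CA\mathbf{O}_q(K\backslash G)$ to be its $\CA$-span. The crucial algebraic input to verify here is that this $\CA$-submodule is stable under multiplication, an ``$\imath$cellularity'' statement for the product inherited from $\mathbf{O}_q$.

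Setting $q=1$ via $\CA\to\BZ$ then produces a $\BZ$-free commutative algebra ${}_\BZ\mathbf{O}(K\backslash G)$ sitting inside ${}_\BZ\mathbf{O}$, whose $\BZ$-basis is the image of $B(K\backslash G)_\CA$; this is the candidate $B(K\backslash G)$ of the theorem. To identify its base change to $k$ with $k[K_k\backslash G_k]$, I would combine a weight-by-weight argument with the existence of the symmetric subgroup scheme over $\BZ$ from \cite{BS22}: in characteristic zero the identification follows from Peter--Weyl, and in general one compares ${}_k\mathbf{O}(K\backslash G)$ with the $K_k$-invariants inside $k[G_k]$ under the left regular action, using that both sides are graded by the weight lattice with characteristic-independent multiplicities coming from the $\imath$canonical basis.

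The main obstacle is this last step: matching the algebraically defined $q=1$ specialization with the \emph{geometrically} defined coordinate ring $k[K_k\backslash G_k]$ uniformly in positive characteristic. The $\imath$canonical basis delivers $\BZ$-flatness on the quantum side essentially for free, but transporting this to the classical side requires that $k[K_k\backslash G_k]$ have characteristic-independent dimensions of $G_k$-isotypic components --- equivalently, that it admit a good filtration, which is the second half of Theorem~\ref{thm:KG}. I expect the canonical basis $B(K\backslash G)$ to be precisely what simultaneously witnesses this good filtration and closes the specialization argument, so the two statements are most naturally proved in tandem.
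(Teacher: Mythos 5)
Your high-level strategy — quantize the coordinate ring, use the Bao--Wang $\imath$canonical basis to get an $\CA$-form, specialize at $q=1$ — is exactly the paper's strategy, and you correctly identify the two places where the work happens: multiplicative closure and the identification with $k[K_k\backslash G_k]$. But there are two concrete gaps that matter.

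First, your Peter--Weyl description
\[
\mathbf{O}_q(K\backslash G)\cong\bigoplus_\lambda\bigl(V(\lambda)^{\Ui}\bigr)^{*}\otimes V(\lambda)
\]
uses $\Ui$-\emph{invariants}, which is fine over $\Qq$ but is precisely what cannot be carried integrally. The paper instead defines $\mathbf{O}(K\backslash G)$ as linear forms on $\Udot$ vanishing on $\agUi\Udot$ (the augmentation ideal acting on the left), so that the building blocks are the \emph{coinvariant spaces} $M/\agUi M$ rather than invariants. Over $\CA$ and in positive characteristic these genuinely differ, and the whole point is to control the coinvariants. The key technical input you are missing is the paper's Theorem~\ref{thm:Fbased}: for a finite-dimensional based $\U$-module $(M,B)$, the submodule $\agUi M$ is a \emph{based} $\Ui$-submodule of $(M,B^\imath)$ with basis $B^\imath\setminus B^\imath_\ast$, so $M/\agUi M$ inherits a canonical basis and a free $\CA$-lattice. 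Proving this requires the stability theorem of Watanabe (existence of a based map $V(\lambda)\to V(0)$ for $\lambda\in X^+_\imath$) plus a triangularity/bar-invariance argument; without it you have no canonical basis of $\mathbf{O}(K\backslash G)$, no $\CA$-freeness, and no control on the $q=1$ fiber. By contrast, the multiplicative closure you flag as a hard ``$\imath$cellularity'' statement is actually cheap: it reduces to $\Delta(\agUi)\subset\agUi\otimes\U+\U\otimes\agUi$, checked on generators, with no basis-level input.

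Second, the proposed route for the identification ${}_k\mathbf{O}(K\backslash G)\cong k[K_k\backslash G_k]$ is circular in a way the paper deliberately avoids. You say this step ``requires that $k[K_k\backslash G_k]$ admit a good filtration'' and suggest the two statements (the identification and the good filtration) must be proved in tandem. In the paper the logic runs the other way: the identification (Theorem~\ref{thm:KG}) is proved directly by invoking \cite{BS22}*{Theorem~3.8}, which says the defining ideal of $K_k$ in $k[G_k]$ is exactly $\{f\mid f({}_k\Uidot)=0\}$. Combined with the formula ${}^{K_k}k[G_k]_{\le\lambda}=\{f\mid \delta(f)-1\otimes f\in\mathcal I\otimes k[G_k]\}$, the freeness of the integral forms from Theorem~\ref{thm:Fbased}, and the affineness of $K_k\backslash G_k$, this gives the identification with no reference to good filtrations. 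The good filtration (Theorem~\ref{thm:fil}) is then a \emph{consequence} of the based filtration of $\Udot/\Udot[\not\le\lambda]$, not an ingredient of Theorem~\ref{thm:1}. So the obstruction you perceive dissolves once you have the coinvariant-based canonical basis theorem in hand.
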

We actually construct a quantization ${}_\CA \mathbf{O}(K \backslash G)$ of the coordinate ring ${}_\BZ \mathbf{O}(K \backslash G)$ over the ring $\CA= \BZ[q,q^{-1}]$, where ${}_\BZ \mathbf{O}(K \backslash G)$ is obtained via a specialization of $q$ at $1$.

The desire to construct quantum homogeneous spaces was the original motivation to study quantum symmetric pairs \cites{Noumi, NS}. However, the existing construction of quantum homogeneous spaces was entirely over the rational field $\Qq$; cf. \cite{Le03}. So it can not be specialized even to recover the $q=1$ classical case. In order to construct the $\CA$-form, one is forced to utilize the theory of canonical bases on quantum symmetric pairs developed by the first author and Wang in \cite{BW18a}.

     \subsection{} 
 It is a classical result that the coordinate ring $k[G_k]$ admits a direct sum decomposition $k[G_k] \cong \bigoplus_{\lambda \in X^+} {}_k V(\lambda)^* \otimes {}_k V(- w_0\lambda)^*$ as $G_k \times G_k$-modules if $k$ is of characteristic $0$. 
 In positive characteristics, the coordinate ring $k[G_k]$ is not semisimple anymore. In this case, we instead have a filtration of $k[G_k]$  by $G_k \times G_k$-submodules indexed by dominate weights in $X^+$, such that $k[G_k]_{\le \lambda} / k[G_k]_{< \lambda} \cong {}_k V(\lambda)^* \otimes {}_k V(- w_0\lambda)^*$ for any $\lambda \in X^+$. This is often referred as a good filtration \cite{Jan03}*{\S4.20} of $k[G_k]$.

    In light of Lusztig's construction of the coordinate ring $k[G_k]$ using quantum groups, the good filtration on $k[G_k]$ can then be obtained via dualizing the cell filtration \cite{Lu94}*{Chapter~29} on quantum groups. 

Assume again char $k \neq 2$. The coordinate ring $k[K_k \backslash G_k]$ of the symmetric space $K_k \backslash G_k$ is naturally a $G_k$-module. If $k$ is of characteristic $0$, it follows from the direct sum decomposition of $k[G_k]$ that $k[K_k \backslash G_k] \cong \bigoplus_{\lambda \in X_\imath^+}  {}_k V(- w_0\lambda)^*$ as a $G_k$-module.  Here $X_\imath^+$ is a subset of $X^+$ consisting of spherical dominant weights (See \S\ref{sec:spherical}).  
Little is known for positive characteristics.

    We state the second main result of this paper. 

    \begin{thmintro}[Theorem~\ref{thm:fil}]\label{thm:2}    The $G_k$-module $k[K_k \backslash G_k]$ admits a good filtration indexed by $ X_\imath^+$, such that 
    \[
         k[K_k \backslash G_k]_{\le \lambda} / k[K_k \backslash G_k]_{< \lambda} \cong {}_k V(- w_0\lambda)^*, \quad \text{for any }\lambda \in X_\imath^+.
    \]
    \end{thmintro}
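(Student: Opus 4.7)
The plan is to lift the problem to the quantum $\CA$-form constructed for Theorem~\ref{thm:1} and dualize the cell filtration coming from the modified quantum group, then specialize at $q=1$ and base change to $k$. The key inputs are Lusztig's cell filtration (\cite{Lu94}*{Chapter~29}) on $\dot{\U}$ and the $\imath$-canonical basis theory of \cite{BW18a} on the quantum symmetric pair $(\U,\U^\imath)$.

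Dualizing Lusztig's two-sided cell filtration produces a $\U\times\U$-equivariant filtration ${}_\CA\mathbf{O}_{\le\lambda}$ ($\lambda\in X^+$) on ${}_\CA\mathbf{O}$, with subquotients the $\CA$-free modules ${}_\CA V(\lambda)^*\otimes{}_\CA V(-w_0\lambda)^*$, compatible with the canonical basis $B(G)\subset{}_\CA\mathbf{O}$; its $q=1$ specialization tensored with $k$ is the classical good filtration on $k[G_k]$. Setting
\[
{}_\CA\mathbf{O}(K\backslash G)_{\le\lambda}:={}_\CA\mathbf{O}(K\backslash G)\cap{}_\CA\mathbf{O}_{\le\lambda},
\]
I would first show that $B(K\backslash G)\cap{}_\CA\mathbf{O}_{\le\lambda}$ is an $\CA$-basis of ${}_\CA\mathbf{O}(K\backslash G)_{\le\lambda}$; this canonical-basis compatibility guarantees $\CA$-freeness of both terms and of the subquotients, so the filtration survives base change. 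Next, the subquotient embeds into ${}_\CA V(\lambda)^*\otimes{}_\CA V(-w_0\lambda)^*$ as the $K$-invariants of the left tensor factor. Using the $\imath$-canonical basis on ${}_\CA V(\lambda)^*$, I would identify ${}_\CA V(\lambda)^{*,K}$ as an $\CA$-free module of rank one spanned by a spherical $\imath$-canonical vector exactly when $\lambda\in X_\imath^+$, and zero otherwise. Hence the subquotient is ${}_\CA V(-w_0\lambda)^*$ for $\lambda\in X_\imath^+$, and vanishes for $\lambda\notin X_\imath^+$.

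Specialization at $q=1$ followed by $k\otimes_\BZ(-)$ preserves these short exact sequences because every term is $\CA$-free, yielding a $G_k$-stable filtration on ${}_k\mathbf{O}(K\backslash G)\cong k[K_k\backslash G_k]$ with the required subquotients ${}_kV(-w_0\lambda)^*$; the $G_k$-equivariance is inherited from the $\dot{\U}$-action on ${}_\CA\mathbf{O}$ and the identifications of Theorem~\ref{thm:1}. The main obstacle is the integral Helgason-type statement: verifying that the $K$-fixed submodule of each cell quotient is at most one-dimensional over $\CA$, that it is free of rank one exactly for spherical $\lambda$, and that it is supported on a single $\imath$-canonical basis vector. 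Over $\BQ(q)$ this follows from semisimplicity of the generic quantum group, but over $\CA$ it is precisely where the triangularity and integrality properties of the $\imath$-canonical basis of \cite{BW18a} become indispensable.
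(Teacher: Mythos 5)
Your proposal follows essentially the same route as the paper: dualize the cell filtration to obtain the Peter--Weyl filtration on ${}_\CA\mathbf{O}$, intersect with ${}_\CA\mathbf{O}(K\backslash G)$, argue $\CA$-freeness via canonical-basis compatibility, identify the subquotients by a Cartan--Helgason type statement, and specialize at $q=1$. The ``integral Helgason-type statement'' you correctly flag as the crux is supplied in the paper by two inputs: Theorem~\ref{thm:Fbased} (stated as one of the main results of the paper), which says that for any finite-dimensional based $\U$-module $(M,B)$ the submodule $\agUi M$ is a \emph{based} $\Ui$-submodule with complementary basis $B^\imath_\ast$, so that the coinvariant functor $M\mapsto M^\imath_\ast=M/\agUi M$ is based and exact (Proposition~\ref{prop:fc}); and Watanabe's Theorem~\ref{thm:Vlabased}, which gives the rank-one spherical vector in $V(\lambda)^\imath_\ast$ exactly when $\lambda\in X^+_\imath$. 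The paper phrases everything in terms of this coinvariant functor rather than $\Ui$-invariants: Lemma~\ref{lem:free1} gives precisely the basis compatibility you want for ${}_\CA\mathbf{O}(K\backslash G)_{\le\lambda}$ by identifying it with $\mathrm{Hom}_\A\big({}_\A(\Udot/\Udot[\not\le\lambda])^\imath_\ast,\A\big)$; Corollary~\ref{cor:exact} produces the short exact sequences of coinvariants you need so that the subquotient of $\mathbf{O}(K\backslash G)$ is not merely contained in but equal to the spherical part of the cell subquotient; and Proposition~\ref{prop:quo} computes $(\dot{\U}[\not<\lambda]/\dot{\U}[\not\le\lambda])^\imath_\ast\cong{}^{\sigma\omega}V(\lambda)$ for $\lambda\in X^+_\imath$, which after the $\sigma\omega$-twist and dualization gives ${}_kV(-w_0\lambda)^*$.

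So the outline is right, and you have put your finger on exactly the step where integral $\imath$-canonical-basis theory is indispensable. But note that your proposal leaves two things as unproved assertions that the paper must (and does) establish: first, that the subquotient of ${}_\CA\mathbf{O}(K\backslash G)$ is \emph{all} of the $\Ui$-invariants of the cell subquotient and not merely a submodule of it --- this requires the exactness in Corollary~\ref{cor:exact}, which comes from the based coinvariant Theorem~\ref{thm:Fbased}; second, that $\agUi M$ is $\CA$-spanned by a subset of the $\imath$canonical basis so that the coinvariant $M^\imath_\ast$ is a free $\CA$-module summand and the whole construction commutes with base change. Both reduce to Theorem~\ref{thm:Fbased}, which is a genuine new result of the paper, not a formal consequence of \cite{BW18a} alone.
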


\subsection{} The construction of both the commutative ring ${}_\BZ \mathbf{O}(K \backslash G)$ (or the quantization ${}_\CA \mathbf{O}(K \backslash G)$) and the good filtration of $k[K_k \backslash G_k]$ relys on the study of  based modules of $\imath$quantum groups. 

Let $(\U, \Ui)$ be a quantum symmetric pair \cite{BW18a} associated with the symmetric pair $(G_k, K_k)$. The subalgebra $\Ui \subset \U$ of the quantum group $\U$ is refered as the $\imath$quantum group.

We now state the third main result of this paper on based $\Ui$-modules, which is the technical foundation for both Theorem \ref{thm:1} and Theorem \ref{thm:2}.

\begin{thmintro}[Theorem~\ref{thm:Fbased}]
    Let $M$ be a finite-dimensional based $\U$-module. Let $\agUi$ be the augmentation ideal of $\Ui$. Then $M_+=\agUi M$ is a based $\Ui$-submodule of $M$, and the coinvariant space $M/ M_+$ is naturally a based $\Ui$-module.
\end{thmintro}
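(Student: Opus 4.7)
The plan is to deploy the theory of $\imath$-canonical bases from \cite{BW18a}, which endows any finite-dimensional based $\U$-module $M$ (with canonical basis $B(M)$) with a new basis $B^\imath(M) = \{b^\imath : b \in B(M)\}$ making $M$ into a based $\Ui$-module. The transition matrix from $B(M)$ to $B^\imath(M)$ is upper-triangular with $1$'s on the diagonal for a suitable partial order. My goal is to show this $\imath$-canonical basis is compatible with the short exact sequence $0 \to M_+ \to M \to M/M_+ \to 0$: a subset of $B^\imath(M)$ should span $M_+$, and the complementary subset should project to a basis of $M/M_+$.

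First I would verify the easier preliminary properties. Since $\agUi$ is a two-sided ideal (the kernel of the augmentation $\epsilon:\Ui \to \CA$), $M_+$ is automatically a $\Ui$-submodule; since $\agUi$ is stable under the $\imath$-bar involution, so is $M_+$; and the $\CA$-lattice of $M$ intersects $M_+$ in an $\CA$-lattice. The heart of the argument is then to show that
\[
B^\imath_+(M) := \{b^\imath \in B^\imath(M) : b^\imath \in M_+\}
\]
is an $\CA$-basis for $M_+$. Granted this, the complement $B^\imath(M) \setminus B^\imath_+(M)$ projects to an $\CA$-basis of $M/M_+$ by a dimension count, and bar-invariance and integrality of the quotient basis are inherited from $B^\imath(M)$.

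I would establish this claim first for a simple $\U$-module $V(\lambda)$. Classical spherical representation theory, accessed via specialization at $q=1$ and semisimplicity, yields $\dim V(\lambda)/V(\lambda)_+ = 1$ if $\lambda \in X_\imath^+$ and $0$ otherwise. Combining this with the explicit structure of $B^\imath(V(\lambda))$ from \cite{BW18a}, I would identify a distinguished ``spherical'' element of $B^\imath(V(\lambda))$ generating the coinvariants, while showing that every other $\imath$-canonical basis element lies in $V(\lambda)_+$. The general case then follows by reducing to simple modules via the cell filtration of $M$, whose sub-quotients are based Weyl modules, using that short exact sequences of based modules are compatible with both the $\imath$-canonical basis construction and the formation of $M_+$.

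The main obstacle will be the analysis for simple modules: pinning down which $\imath$-canonical basis element is spherical, and verifying that all others lie in $V(\lambda)_+$. This amounts to a concrete study of the $\Ui$-action on $B^\imath(V(\lambda))$, using the base-change formulas between $B(V(\lambda))$ and $B^\imath(V(\lambda))$ restricted to the lowest-weight stratum. Once this case is settled, the inductive reduction and the verification of the based $\Ui$-module axioms on $M_+$ and $M/M_+$---bar-invariance, integrality, and upper-triangular compatibility with $B^\imath(M)$---follow formally.
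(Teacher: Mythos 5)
The broad outline you sketch is close in spirit to the paper's proof: both rely on (i) the single-module statement for $V(\lambda)$ — which is really Watanabe's stability theorem for $\imath$-canonical bases (the paper's Theorem~\ref{thm:Vlabased}) — and (ii) the cell filtration of $M$ from \cite{BW18a}, \cite{BW21}. However, there are two genuine gaps in how you propose to execute the reduction.

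First, you treat the simple-module case as a tractable "concrete study" of $B^\imath(V(\lambda))$ via base-change formulas, to be carried out by hand. In fact, the content you need is exactly the statement that the quotient map $V(\lambda)\to V(0)$ (for $\lambda\in X^+_\imath$) is a morphism of \emph{based} $\Ui$-modules — i.e.\ sends each $\imath$-canonical basis element to $0$ or to $v^+_0$. This is precisely the stability conjecture from \cite{BW18a}, proved by Watanabe in \cite{Wa23}, and is a deep input, not something that falls out of a direct computation with the transition matrix. (The paper even flags that the parameters $\varsigma_i$ are chosen specifically so that this theorem applies.) You should invoke it rather than propose to re-derive it.

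Second, and more seriously, the step "the general case then follows by reducing to simple modules via the cell filtration\dots{} using that short exact sequences of based modules are compatible with\dots{} the formation of $M_+$" is circular. Compatibility of $(\cdot)_+$ with short exact sequences is exactly the exactness of the coinvariants functor, which the paper establishes (Proposition~\ref{prop:fc}) \emph{as a consequence of} Theorem~\ref{thm:Fbased}, not as an input to it. Left exactness of coinvariants (equivalently, the identity $M_+\cap M'=M'_+$ for a based submodule $M'\subset M$) is not automatic. Even granting a dimension count (e.g.\ via specialization at $q=1$ and semisimplicity over $\BC$) that pins down $\dim M_+$, a bar-invariant, lattice-compatible subspace of the right dimension need not be spanned by a subset of $B^\imath$: you would still need to exclude something like $b^\imath_2 + c\,b^\imath_1\in M_+$ with $c\in\BZ\setminus\{0\}$. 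The paper sidesteps all of this by \emph{not} filtering naively. Instead it uses Kashiwara's $\U$-module splitting $\phi\colon M\xrightarrow{\sim}\bigoplus_i\bigoplus_{b} V(\lambda_i)$ (Lemma~\ref{lem:Ka91}), which is not a based morphism but is lattice-compatible, to build an explicit $\Ui$-morphism $f\colon M\to F(M)$ onto a trivial module, and then verifies directly that $f$ commutes with $\psi_\imath$, preserves $L(M)$, and hence is based, and finally that $\ker f = M_+$ by dimension count. The point is that one produces a \emph{map} witnessing the based-quotient structure, rather than trying to check spanning of $M_+$ by $B^\imath$ from the inside. To repair your argument you would essentially need to rediscover this device.
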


Based modules and based morphisms are well-behaved with respect to specilizations. 
We are then able to deduce classical results on symmetric spaces via specializations.

\subsection{} In a subsequent paper, we shall apply results in this paper to construct the Vinberg enveloping variety of the symmetric space $K_k \backslash G_k$ and an integral model of the wonderful compactification of $K_k \backslash G_k$. 

    \vspace{.2cm}
\noindent {\bf Acknowledgement: } Both authors are supported by MOE grants A-0004586-00-00 and A-0004586-01-00.

\section{Preliminaries}
\subsection{Quantum groups}\label{sec:qp}
		We recall quantum groups following Lusztig's book \cite{Lu94}.
	
	\subsubsection{Definitions}\label{sec:def} 
	Let $(\I,Y,X,A,(\alpha_i)_{i\in\I},(\coroot_i)_{i\in\I})$ be a root datum of finite type; cf. \cite{BS22}*{\S2.1}. Let $D=diag(\varepsilon_i)_{i\in\I}$ be the diagonal matrix such that $DA$ is a symmetric matrix, with $\epsilon_i\in\mathbb{\BZ}_{>0}$   and $\{\varepsilon_i\mid i\in\I\}$ are relatively prime. Let $W=\langle s_i\mid i\in I\rangle$ be the Weyl group associated with the root datum with the longest element denoted by $w_0$. For $\lambda,\mu\in X$, we write $\lambda\leq \mu$ if $\mu-\lambda\in\sum_{i\in I}\mathbb{Z}_{\geq 0}\alpha_i$. Let $X^+=\{\mu\in X\mid\langle\alpha_i^\vee,\mu\rangle\geq 0,\forall i\in\I\}$ be the set of \emph{dominant weights}.
 
 Let $q$ be an indeterminate. We write $\A=\mathbb{Z}[q,q^{-1}]$ to be the subring of $\mathbb{Q}(q)$. Let $\U$ be the Drinfeld--Jimbo quantum group associated with the root datum; cf. \cite{Lu94}*{\S 3.1}. Recall that $\U$ is an associated $\Qq$-algebra with 1. It has generators $
	E_i$ $(i\in\I)$, $F_i$ $(i\in\I)$ and $ K_\mu$  $( \mu\in Y)$. The algebra $\U$ has a structure of a Hopf algebra. Let $\varepsilon:\U\rightarrow\Qq$ be the counit, and $\Delta:\U\rightarrow \U\otimes \U$ be the coproduct; cf. \cite{Lu94}*{Lemma 3.1.4 \& \S3.1.11}.
 
Following \cite{Lu94}*{\S 3.1.3}, let $\sigma:\U\rightarrow\U$ be the algebra anti-automorphism such that $\sigma(E_i)=E_i$, $\sigma(F_i)=F_i$, for $i\in I$, and $\sigma(K_\mu)=K_{-\mu}$, for $\mu\in Y$, and let $\omega:\U\rightarrow\U$ be the algebra automorphism such that $\omega(E_i)=F_i$, $\omega(F_i)=E_i$, for $i\in I$, and $\omega(K_\mu)=K_{-\mu}$, for $\mu\in Y$.

For any $\lambda \in X^+$, we denote by $V(\lambda)$ the highest weight simple $\U$-module with highest weight $\lambda$. We denote the highest weight vector by $v^+_\lambda$ and the canonical basis of $V(\lambda)$ by $\RB(\lambda)$.

\subsubsection{Based modules}\label{sec:based}
Let $\mathbf{A} = \Qq \cap \BQ[[q^{-1}]]$ be the ring of rational functions which are regular at $q=\infty$. Let $(M,B)$ be a finite-dimensional based $\U$-module over $\Qq$; cf. \cite{Lu94}*{\S27}. We denote by $_\A M$ (resp., $L(M)$) the $\A$-submodule (resp., $\mathbf{A}$-submodule) of $M$ spanned by $B$. For any commutative $\CA$-algebra $R$, we write ${}_R M = R \otimes_{\A} {}_\A M$ and still denote the basis by $B$. We often consider a commutative ring $R$ as an $\A$-algebra via the ring homomorphism $\A \rightarrow R$, $q \mapsto 1$ unless specified otherwise.

Following \cite{Lu94}*{Proposition~27.1.8}, for any $\lambda \in X^+$, we have the based submodule $M[\ge \lambda] \subset M$ (resp., $M[> \lambda] \subset M$) with basis $B[\ge \lambda] = B \cap M[\ge \lambda]$ (resp., $B[> \lambda] = B \cap M[> \lambda]$). We set $B[\lambda] = B[\ge \lambda] - B[> \lambda]$. Let $B[\lambda]^{hi}$ be the subset $B[\lambda]$ defined in \cite{Lu94}*{\S27.2.3}.

For $\lambda\in X^+$, let $\pi_\lambda: M \rightarrow M / M[\ge \lambda]$ be the canonical projection map. Then $M / M[\ge \lambda]$ is naturally a based $\U$-module with a basis consisting of the image of $B - B[\ge \lambda]$ under the map $\pi_\lambda$; see \cite{Lu94}*{\S27.1.4}. In particular, the map $\pi_\lambda: M \rightarrow M / M[\ge \lambda]$ is based in the sense of \cite{Lu94}*{\S27.1.3}. We often abuse notations and denote by $B - B[\ge \lambda]$ the basis of $M / M[\ge \lambda]$.

For any $\lambda \in X^+$, the subquotient $M[\ge \lambda] / M[> \lambda]$ is isomorphic to the based module $(\bigoplus_{i=1}^{n_\lambda} V(\lambda), B')$, where $B'$ is given by the union of the canonical bases of the various summands $V(\lambda)$ and $n_\lambda$ is the cardinality of the subset $B[\lambda]^{hi}\subset B$.

We conclude that 

{\it (a) the based module $(M,B)$ admits a based filtration 
\begin{equation*} 
0 = M[\ge \lambda_0] \subset M[\ge \lambda_1] \subset M[\ge \lambda_2] \subset \cdots \subset M[\ge \lambda_n] = M, \quad \text{for }\lambda_i \in X^+,
\end{equation*}
such that, for $1 \le i \le n$, the map $\phi_{\lambda_i}:M[\ge \lambda_i]/M[\ge \lambda_{i-1}] \overset{\sim}{\rightarrow}\bigoplus_{b \in B[\lambda_i]^{hi}} V(\lambda_i)$, $\Bar{b}\mapsto (v_{\lambda_i}^+)_b$, for $b\in B[\lambda_i]^{hi}$, where $(v_{\lambda_i}^+)_b$ is the highest vector of the copy corresponding to $b$ of $V(\lambda_i)$, defines an isomorphism of based $\U$-modules. 
}

The following lemma follows from \cite{Ka91}*{lemma~2.6.3}. 
\begin{lem}\label{lem:Ka91}
Let $(M,B)$ be a finite-dimensional based $\U$-module. We have an isomorphism of $\U$-modules 
\[
\phi: M \longrightarrow \bigoplus_{i =1}^n \bigoplus_{b \in B[\lambda_i]^{hi}} V(\lambda_i),
\]
such that (for any $1 \le j\le n$),
\begin{enumerate}
\item $\phi$ restricts to an isomorphism $M[\ge \lambda_j] \longrightarrow \bigoplus_{i =1}^j \bigoplus_{b \in B[\lambda_i]^{hi}} V(\lambda_i)$;
\item the induced morphism on $M[\ge \lambda_j]/M[\ge \lambda_{j-1}]$ agrees with $\phi_{\lambda_j}$ in \S\ref{sec:based} (a);
\item for any $b \in B[\lambda_j]$, we have $\phi(b) \in \phi_{\lambda_j}(b) + q^{-1} \phi(L(M[\ge \lambda_{j-1}]))$;
\item $\phi$ restricts to isomorphisms as $\mathbf{A}$-modules $L(M[\ge \lambda_j]) \longrightarrow \bigoplus_{i =1}^j \bigoplus_{b \in B[\lambda_i]^{hi}} L(V(\lambda_i))$.
\end{enumerate}
\end{lem}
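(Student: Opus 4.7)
The plan is to argue by induction on the filtration length $n$. The case $n \le 1$ is immediate from the isomorphism $\phi_{\lambda_1}$ supplied in \S\ref{sec:based}(a). For the inductive step I would assume the conclusion for the truncated filtration of length $n-1$, giving an isomorphism
\[
\phi': M[\ge \lambda_{n-1}] \longrightarrow N' := \bigoplus_{i=1}^{n-1} \bigoplus_{b \in B[\lambda_i]^{hi}} V(\lambda_i)
\]
satisfying (1)--(4), and then extend $\phi'$ to an isomorphism $\phi$ on all of $M$. By complete reducibility of finite-dimensional type $1$ $\U$-modules over $\Qq$, this extension is equivalent to choosing a $\U$-equivariant section $s$ of the quotient $M \twoheadrightarrow M/M[\ge \lambda_{n-1}]$, the target being identified with $\bigoplus_{b \in B[\lambda_n]^{hi}} V(\lambda_n)$ via $\phi_{\lambda_n}$.

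Concretely, such a section is determined by choosing, for each $b \in B[\lambda_n]^{hi}$, a highest weight vector $w_b \in M$ of weight $\lambda_n$ whose image in $M/M[\ge \lambda_{n-1}]$ is the distinguished highest weight vector of the summand of $V(\lambda_n)$ indexed by $b$. The content of \cite{Ka91}*{Lemma~2.6.3} that I would invoke is the additional assertion that $w_b$ may be chosen inside $b + q^{-1} L(M[\ge \lambda_{n-1}])$. Setting $s(v^+_{\lambda_n}) = w_b$ on the $b$-th summand and extending $\U$-equivariantly yields the splitting, and gluing with $\phi'$ along the resulting direct sum decomposition $M = M[\ge \lambda_{n-1}] \oplus s\bigl(M/M[\ge\lambda_{n-1}]\bigr)$ produces the sought isomorphism $\phi$ on $M$.

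Properties (1) and (2) are built into this construction. Property (3) at the top level $j=n$ is the congruence $w_b \equiv b \pmod{q^{-1} L(M[\ge \lambda_{n-1}])}$ coming from the choice of $w_b$; at lower levels it is inherited from the inductive hypothesis applied to $\phi'$. Property (4) then follows from (3) by writing any element of $L(M[\ge \lambda_j])$ as an $\mathbf{A}$-linear combination of $B \cap M[\ge \lambda_j]$ and observing that (3) places the image of each such basis element inside the claimed $\mathbf{A}$-lattice on the right-hand side.

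The hard part will be the $\mathbf{A}$-integrality statement in (3): a naive use of complete reducibility over $\Qq$ produces only a $\Qq$-linear splitting, while what is needed is a splitting compatible with the crystal lattice $L(M)$. Producing highest weight lifts $w_b$ inside $b + q^{-1} L(M[\ge \lambda_{n-1}])$ requires iterative corrections controlled by the crystal structure at each weight level — this is precisely the technical content of Kashiwara's lemma, which I would cite rather than reprove.
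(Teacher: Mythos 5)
Your overall route --- induction on the filtration length, complete reducibility to produce a $\Qq$-linear $\U$-equivariant splitting, and Kashiwara's Lemma 2.6.3 for the crystal-lattice compatibility --- is the one the paper intends; the paper gives no argument beyond citing \cite{Ka91}*{Lemma~2.6.3}, and you have correctly identified that lemma as the essential input.

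There is, however, a gap in your treatment of property (3). You say that (3) at the top level $j=n$ \emph{is} the congruence $w_b \equiv b$, but (3) asserts $\phi(b) \in \phi_{\lambda_n}(b) + q^{-1}\phi(L(M[\ge\lambda_{n-1}]))$ for \emph{every} $b \in B[\lambda_n]$, not only for $b$ in the highest-weight subset $B[\lambda_n]^{hi}$ where the lifts $w_b$ are defined. Your construction gives $b - s(\overline{b}) \in q^{-1}L(M[\ge\lambda_{n-1}])$ directly only when $b \in B[\lambda_n]^{hi}$; for general $b \in B[\lambda_n]$ one must propagate this congruence down the crystal, using that the Kashiwara operators $\tilde e_i, \tilde f_i$ are $\Qq$-linear, commute with the $\U$-equivariant weight-preserving section $s$, preserve the crystal lattices, and that $L(M) \cap M[\ge\lambda_{n-1}] = L(M[\ge\lambda_{n-1}])$ because $(M,B)$ is based. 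That propagation is not difficult, but it is precisely the content of (3) for the non-highest-weight elements of $B[\lambda_n]$, and it should either be spelled out or absorbed into the citation of Kashiwara by stating what is being read off more expansively: a lattice-compatible based splitting, not merely lattice-compatible highest weight lifts. A related small point in your deduction of (4) from (3): what you show directly is the containment $\phi(L(M[\ge\lambda_j])) \subset \bigoplus_{i\le j}\bigoplus_b L(V(\lambda_i))$; to get an isomorphism of $\mathbf{A}$-modules one should add that, by (3), the matrix of $\phi$ in the canonical bases is unitriangular modulo $q^{-1}$ and hence invertible over the local ring $\mathbf{A}$.
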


\subsubsection{The modified form}	\label{sec:modiU}
	 Let $\dot{\U}$ be the modified quantum group \cite{Lu94}*{\S 23.1} and $\dot{\RB}$ be its canonical basis (\cite{Lu94}*{\S 25.2.1}). The algebra (without unit) $\dot{\U}$ is a $(\U,\U)$-bimodule in a natural way. Let $\AU$ be the free $\A$-submodule of $\dot{\U}$ with basis $\dot{\RB}$. 
        A unital $\Udot$-module is equivalent to a $\U$-module with a direct sum decomposition into weight spaces by \cite{Lu94}*{\S23.1.4}. These are the only modules we shall consider in this paper.

Lusztig \cite{Lu94}*{\S29.1} defined a partition of the canonical basis $
     \dot{\RB} = \sqcup_{\lambda \in X^+} \dot{\RB}[\lambda]$.
For $\lambda\in X^+$, let $\dot{\U}[\not \le\lambda]$ (resp., $_\A\dot{\U}[\not\le\lambda]$) be the $\Qq$-subspace (resp., $\A$-submodule) of $\dot{\U}$ spanned by $\sqcup_{\lambda'\not\le\lambda}\dot{\RB}[\lambda']$. Then $\Udot[\not\le\lambda]$ (resp., ${}_\mathcal{A}\Udot[\not\le\lambda]$) is a two-sided ideal of $\Udot$ (resp., ${}_\mathcal{A}\Udot$). 

The quotient $\Udot/\Udot[\not\le\lambda]$ is a based $\U$-module with the canonical basis consisting of the image of $\dot{\RB}[\le\lambda]=\sqcup_{\lambda'\le\lambda}\dot{\RB}[\lambda']$. We abuse notations, and denote the quotient based module by $(\Udot/\Udot[\not\le\lambda], \dot{\RB}[\le\lambda])$. Similarly we define the subspace $\dot{\U}[<\lambda]$, the $\A$-submodule $_\A\dot{\U}[<\lambda]$, and the quotient based module $(\dot{\U}/\dot{\U}[\not <\lambda],\dot{\RB}[<\lambda])$.

\subsubsection{Coordinate rings}
Let $\mathbf{O}$ be the $\Qq$-vector space of all $\Qq$-linear forms $f: \Udot \rightarrow \Qq$ with the following property: $f$ vanishes on $\Udot[\not\leq\lambda ]$ for some $\lambda \in X^+$. For any $a \in \dot{\RB}$, we define the linear form $\tilde{a}: \Udot \rightarrow \Qq$, given by $\tilde{a}(a') = \delta_{a, a'}$, for $a'\in\dot{\RB}$. The set $\{\tilde{a} \vert a \in \dot{\RB}\}$ is a $\Qq$-basis of $\mathbf{O}$, called the \emph{canonical basis} of $\mathbf{O}$. Note that elements in $\mathbf{O}$ can be naturally viewed as linear forms of the completion space $\widehat{\U}$, where $\widehat{\U}$ is the $\Qq$-vector space consisting of formal sums of $\Qq$-linear combinations of $\dot{\RB}$.

Let ${}_\CA \mathbf{O}$ be the $\CA$-submodule of $\mathbf{O}$, spanned by the canonical basis. Then by \cite{Lu94}*{\S29.5.2}, ${}_\CA \mathbf{O}$ is a Hopf algebra over $\CA$. For any commutative $\CA$-algebra $R$ with $1$, we write ${}_R \mathbf{O} = R \otimes_{\CA} {}_\CA \mathbf{O}$. Unless specified otherwise, we often consider a commutative ring $R$ as an $\CA$-algebra via the ring homomorphism $\CA \rightarrow R$, $q \mapsto 1$.

\begin{thm}\cite{Lu07}\label{thm:chs}
 The group scheme $Spec\, {}_\BZ \mathbf{O}$ is isomorphic to the Chevalley group scheme $\mathbf{G}_\BZ$ over $\BZ$ associated with the given root datum. 
 
 In particular, the ring ${}_k \mathbf{O}$ is canonically isomorphic to the coordinate ring $k[G_k]$ of the connected reductive algebraic group $G_k$ associated with the given root datum, for any algebraically closed field $k$.
\end{thm}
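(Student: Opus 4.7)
The plan is to construct an explicit Hopf algebra isomorphism by comparing Peter--Weyl type decompositions on both sides. First, I would exploit the partition $\dot{\RB} = \sqcup_{\lambda \in X^+} \dot{\RB}[\lambda]$ and the cell theory of \cite{Lu94}*{Chapter~29}: for each $\lambda \in X^+$, the quotient $\Udot/\Udot[\not\le\lambda]$ carries the structure of a based $(\U,\U)$-bimodule, and after extending scalars to $\Qq$ it admits a filtration whose associated graded pieces are isomorphic to $V(\mu) \otimes V(-w_0\mu)^*$ for $\mu \le \lambda$. Dualizing realizes $\mathbf{O}$ as the increasing union of finite-dimensional $(\U,\U)$-bimodules of matrix coefficients, and the basis $\{\tilde{a}\mid a\in \dot{\RB}\}$ provides a canonical $\CA$-lattice ${}_\CA \mathbf{O}$ compatible with this decomposition.

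Second, I would verify that ${}_\CA \mathbf{O}$ is closed under both the multiplication (dual to the coproduct $\Delta$ of $\U$) and the comultiplication (dual to the algebra structure on $\Udot$); this integrality statement is already the content of \cite{Lu94}*{\S29.5.2}. In particular, ${}_\CA \mathbf{O}$ inherits a Hopf algebra structure over $\CA$ whose structure constants, when expressed in the canonical basis, lie in $\CA$.

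Third, I would specialize at $q=1$. By Lemma~\ref{lem:Ka91}, the canonical basis induces a compatible $\BZ$-lattice inside each module $V(\lambda)$, whose base change to $k$ recovers the classical Weyl module ${}_k V(\lambda)$. Matching this against the good filtration on the Chevalley side (cf.\ \cite{Jan03}*{\S4.20}) yields an isomorphism of $G_k \times G_k$-modules ${}_k \mathbf{O} \cong k[G_k]$ preserving the Peter--Weyl filtration by dominant weights, and the Hopf algebra structure on ${}_k \mathbf{O}$ is transported along this isomorphism.

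The main obstacle is the last compatibility: identifying the specialized Hopf algebra with the coordinate ring of the Chevalley group scheme $\mathbf{G}_\BZ$ \emph{on the nose}, not merely as abstract Hopf algebras over $k$. Concretely, one must track matrix coefficients of small representations (the adjoint and the fundamental representations) through the specialization and verify that they satisfy the defining relations of $\mathbf{G}_\BZ$ in its standard coordinate presentation, so that the resulting affine scheme $\mathrm{Spec}\,{}_\BZ \mathbf{O}$ has the correct group-valued points over any ring. This is the technical heart of \cite{Lu07}: the canonical basis of $\Udot$ specializes at $q=1$ to a distinguished $\BZ$-basis on which the classical convolution product and coproduct on $k[G_k]$ can be read off directly from the $\CA$-structure constants.
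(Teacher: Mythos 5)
This theorem is cited from \cite{Lu07} and the paper offers no proof of its own, so there is nothing in the paper to compare your argument against line by line. Your sketch is a reasonable high-level summary of Lusztig's strategy: realize ${}_\CA\mathbf{O}$ as the span of the dual canonical basis, verify closure under product and coproduct via \cite{Lu94}*{\S29.5.2}, and specialize at $q=1$. But the proposal does not actually prove the theorem; you explicitly hand back the substantive step (identifying $\mathrm{Spec}\,{}_\BZ\mathbf{O}$ with $\mathbf{G}_\BZ$ as a group scheme, not merely matching $G_k\times G_k$-module filtrations) to \cite{Lu07} in your final paragraph. That deferral is acceptable here precisely because the paper itself imports the result wholesale, but be aware that a Peter--Weyl module isomorphism over $k$ does not by itself yield a Hopf algebra isomorphism, and Lusztig's actual route is to show directly that $\mathrm{Spec}\,{}_\BZ\mathbf{O}$ is a smooth affine group scheme over $\BZ$ with connected reductive geometric fibers of the given root datum, then invoke the uniqueness of Chevalley group schemes rather than verifying explicit defining relations. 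Also, a small misattribution: Lemma~\ref{lem:Ka91} as stated in the paper concerns left $\U$-module structure; for the Peter--Weyl filtration of $\Udot$ one needs the $(\U,\U)$-bimodule analog from \cite{Lu94}*{\S29.2--29.3}.
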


We next recall the Peter--Weyl filtration of $\mathbf{O}$ and ${}_\CA  \mathbf{O}$. For $\lambda \in X^+$, we define 
\[
      \mathbf{O}_{\le \lambda}   = {\rm Hom}_{\Qq}(\Udot/ \Udot[ \not \le \lambda], \Qq), \quad {}_\CA \mathbf{O}_{\le \lambda} = \mathbf{O}_{\le \lambda} \cap  {}_\CA \mathbf{O}=\text{Hom}_\A\big({}_\A(\dot{\U}/\dot{\U}[\not\le\lambda]),\A\big).
\]
Then $\mathbf{O}_{\le \lambda}$ (resp., ${}_\CA \mathbf{O}_{\le \lambda}$) is the $\Qq$-span (resp., $\CA$-span) of $\{\tilde{a} \vert a \in \dot{\RB}[\le \lambda]\}$. We define $\mathbf{O}_{<\lambda}$ and $_\A\mathbf{O}_{<\lambda}$ in the similar way.  

We then have 
\[
 \mathbf{O} = \cup_{\lambda \in X^+} \mathbf{O}_{\le \lambda}, \quad {}_\CA \mathbf{O} = \cup_{\lambda \in X^+} {}_\CA \mathbf{O}_{\le \lambda}.
\]
For an algebraically closed field $k$ and $\lambda\in X^+$, set $_k\mathbf{O}_{\leq\lambda}=k\otimes_\A {}_\A\mathbf{O}_{\leq\lambda}$. For any $\lambda \le \lambda'$, since the inclusion ${}_\CA \mathbf{O}_{\le \lambda} \rightarrow {}_\CA \mathbf{O}_{\le \lambda'}$  is compatible with bases, we have an inclusion ${}_k \mathbf{O}_{\le \lambda} \rightarrow {}_k \mathbf{O}_{\le \lambda'}$ as well. Therefore we have ${}_k \mathbf{O}_{\le \lambda} \subset {}_k\mathbf{O}$ for any $\lambda \in X^+$ and $_k\mathbf{O}=\cup_{\lambda\in X^+}{}_k\mathbf{O}_{\le\lambda}$.

Let $G_k$ be the connected reductive group as in Theorem \ref{thm:chs}. We define $G_k \times G_k$-module structure on $_k\mathbf{O} \cong k [G_k]$ as follows 
\[
 ((g_1, g_2) \cdot f )(x) = f(g_1^{-1} x g_2), \text{ for } (g_1, g_2) \in G_k \times G_k, f \in k [G_k], x \in G_k.
\]
Let ${}_kV(\lambda)^*$ be the dual Weyl module for any $\lambda \in X^+$. 
Dualizing the construction in \cite{Lu94}*{\S 29.3}, we obtain the following classical Peter--Weyl filtration; cf. \cite{Jan03}*{Proposition~4.20}. 

\begin{thm}\label{thm:PW}
For any $\lambda \in X^+$, the subspaces  $_k\mathbf{O}_{\le \lambda}$ and $_k\mathbf{O}_{< \lambda}$ are $G_k \times G_k$-submodules of $_k\mathbf{O}$ such that 
\[
     _k\mathbf{O}_{\le \lambda}/ _k\mathbf{O}_{< \lambda} \cong {}_kV(\lambda)^* \otimes_k {}_kV(-w_0\lambda)^*.
\]
In particular, the $G_k \times G_k$-module $_k\mathbf{O}$ admits a good filtration. 
\end{thm}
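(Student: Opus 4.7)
The plan is to dualize Lusztig's cell filtration of $\Udot$ and transport the resulting structures across the specialization $q=1$. Since $\Udot[\not\le\lambda]$ and $\Udot[\not<\lambda]$ are two-sided ideals of $\Udot$ (recalled in \S\ref{sec:modiU}), the quotients $\Udot/\Udot[\not\le\lambda]$ and $\Udot/\Udot[\not<\lambda]$ carry natural $(\U,\U)$-bimodule structures. Dualizing (using an anti-automorphism such as $\sigma$ to convert the right action into a left action) equips $\mathbf{O}_{\le\lambda}$ and $\mathbf{O}_{<\lambda}$ with $\U\otimes\U$-submodule structures inside $\mathbf{O}$. Because the ideals are spanned by subsets of the canonical basis $\dot{\RB}$, these structures restrict to $({}_\CA\Udot,{}_\CA\Udot)$-bimodule structures on the integral forms ${}_\CA\mathbf{O}_{\le\lambda}$ and ${}_\CA\mathbf{O}_{<\lambda}$.

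Next I base change to $k$ at $q=1$. By Lusztig's Theorem \ref{thm:chs}, ${}_k\mathbf{O}\cong k[G_k]$ and the induced $({}_k\Udot,{}_k\Udot)$-action is identified with the $G_k\times G_k$-action on $k[G_k]$ defined immediately before the theorem. This immediately gives the first assertion, that ${}_k\mathbf{O}_{\le\lambda}$ and ${}_k\mathbf{O}_{<\lambda}$ are $G_k\times G_k$-submodules of ${}_k\mathbf{O}$.

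For the subquotient, I invoke Lusztig's cell theorem \cite{Lu94}*{\S29.3}. For each $\lambda\in X^+$ that theorem provides a $(\U,\U)$-bimodule isomorphism from the cell $\Udot[\not<\lambda]/\Udot[\not\le\lambda]$ onto a bimodule built from $V(\lambda)$ and its $\omega$-twisted copy (morally, $V(\lambda)\otimes{}^\omega V(\lambda)^*$), implemented by maps compatible with the canonical bases. After the standard bookkeeping converting the right $\U$-action on the second factor into a left action of highest weight $-w_0\lambda$, dualization yields a $\U\otimes\U$-module isomorphism
\[
\mathbf{O}_{\le\lambda}/\mathbf{O}_{<\lambda}\;\cong\;V(\lambda)^*\otimes V(-w_0\lambda)^*.
\]
Since all maps involved are based, this restricts to an $\CA$-linear isomorphism of the integral forms; specializing at $q=1$ and base changing to $k$ produces the desired $G_k\times G_k$-module isomorphism, and the last sentence (existence of a good filtration) follows by taking the chain of subspaces $\{{}_k\mathbf{O}_{\le\lambda}\}_{\lambda\in X^+}$.

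The main obstacle is the final bookkeeping: one must correctly match Lusztig's quantum-level identification of the cell subquotient with the specific pair $({}_kV(\lambda)^*,{}_kV(-w_0\lambda)^*)$, tracking carefully how $\sigma$ and $\omega$ transform highest weights and convert right actions into left actions. Once this identification is pinned down at the level of $\Qq$-modules, compatibility with the canonical bases makes the passage to ${}_\CA\mathbf{O}$ and thence to ${}_k\mathbf{O}$ automatic.
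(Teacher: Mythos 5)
Your proposal follows essentially the same route the paper indicates: the paper gives no detailed proof of this theorem, but justifies it by ``dualizing the construction in \cite{Lu94}*{\S 29.3}'' and cites \cite{Jan03}*{Proposition~4.20}, which is exactly the strategy you carry out (dualize the cell subquotient $\dot{\U}[\not<\lambda]/\dot{\U}[\not\le\lambda]\cong V(\lambda)\boxtimes{}^{\sigma\omega}V(\lambda)$, use compatibility of the canonical basis with the cell filtration, and specialize at $q=1$). The only thing you leave as a caveat --- precisely pinning down how $\sigma$ and $\omega$ convert the right factor into $V(-w_0\lambda)^*$ --- is the same piece of bookkeeping the paper itself elides, and is addressed in its proof of Theorem~\ref{thm:fil}, so there is no genuine gap relative to the paper's level of detail.
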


\subsection{Quantum symmetric pairs}

\subsubsection{$\imath$Quantum groups}\label{sec:iqp}

Let $(\I,Y,X,A,(\alpha_i)_{i\in\I},(\coroot_i)_{i\in\I}), I_\bullet, \tau)$ be an $\imath$root datum; cf. \cite{BS22}*{\S2.3.1}. Let $W_{I_\bullet}=\langle s_i\mid i\in I_\bullet\rangle$ be the parabolic subgroup associated with $I_\bullet$, and let $w_\bullet$ be the longest element of $W_{I_\bullet}$. Denote by  $\theta = -w_\bullet \circ \tau$ the involution on both $X$ and $Y$, by slightly abuse of notation. Set 
	\begin{equation*}
	X_\imath=X/\langle \lambda-\theta\lambda\mid\lambda\in X\rangle\qquad \text{and}\qquad Y^\imath=\{\mu\in Y\mid \theta\mu=\mu\}.
	\end{equation*}
	We call $X_\imath$ the \emph{$\imath$weight lattice}, and call $Y^\imath$ the \emph{$\imath$coweight lattice}. There is a natural pairing $Y^\imath\times X_\imath\rightarrow\mathbb{Z}$ inherited from the pairing between $Y$ and $X$. Note that this pairing is not necessarily perfect. For any $\lambda\in X$, we write $\overline{\lambda}$ to denote its image in $X_\imath$.

	For each $i\in I_\circ = I - I_\bullet$, we take a parameter $\varsigma_i\in \pm q^{\mathbb{Z}}$. Let $\U^\imath=\U^\imath_{\mathbf{\varsigma}} \subset \U$ be the $\imath$quantum group defined in \cite{BW18a}*{\S3.3}. By definition, $\U^\imath$ is the $\mathbb{Q}(q)$-subalgebra of $\U$ generated by elements
	\begin{equation*}
	B_i=F_i+\varsigma_i T_{w_{\bullet}}(E_{\tau i})\Tilde{K}_i^{-1}\quad (i\in I_\circ),\qquad F_i\quad (i\in I_\bullet),\qquad E_i\quad (i\in I_\bullet),\qquad K_\mu\quad (\mu\in Y^\imath).
	\end{equation*}
We write $B_i=F_i$, for $i\in I_\bullet$. Here $T_{w_{\bullet}}$ denotes Lusztig's Braid group operator; cf. \cite{BW18a}*{\S2.2}.
 
	The pair $(\U, \U^\imath)$ is called a {\em quantum symmetric pair}. The subalgebra $\U^\imath$ is called an \emph{$\imath$quantum group}. In what follows, we assume the parameters $(\varsigma_i)_{i\in I_\circ}$ are taken as the same as in \cite{Wa23}*{Lemma 4.2.1}. The only reason we pick these particular parameters is that the stability conjecture of $\imath$canonical basis, or equivalently, Theorem \ref{thm:Vlabased}, holds for these parameters. 
	
	Let $\dot{\U}^\imath$ be the modified algebra of $\U^\imath$ and let $\dot{\RB}^\imath$ be the canonical basis of $\dot{\U}^\imath$ defined in \cite{BW18a}*{\S3.7 \& \S6.4}. 
Let $_\A\dot{\U}^\imath$ be the $\A$-form of $\dot{\U}^\imath$ \cite{BW18a}*{Definition~3.19}. For any $\A$-algebra $R$, set ${}_R\dot{\U}^\imath=R\otimes_{\A} {_\A\dot{\U}^\imath}$. A unital $\Uidot$-module is equivalent to a $\Ui$-module with a direct sum decomposition intro weight spaces by \cite{BW18a}*{\S3.7}. These are the only modules we shall consider in this paper.

\subsubsection{Based $\Ui$-modules} \label{sec:basedUi}
Let $\psi_\imath$ be the anti-linear bar involution on $\U^\imath$ as in \cite{BW18a}*{Lemma 3.15}. Let $(M, B)$ be a finite-dimensional based $\U$-module. We consider $M$ naturally as a $\Ui$-module by restriction. Then by \cite{BW21}*{Theorem~6.12}, $M$ admits an anti-linear involution $\psi_\imath$ such that 
\[
\psi_\imath(um) = \psi_\imath(u) \psi_\imath(m), \quad u \in \Ui, m \in M.
\]

Moreover, the $\Ui$-module $M$ admits a unique basis, called the \emph{$\imath$canonical basis}, denoted by $B^\imath = \{b^\imath \vert b \in B\}$, such that 
\begin{enumerate}   
\item $\psi_\imath (b^\imath ) = b^\imath$ for any $b^\imath$;
\item $b^\imath = b + \sum_{b' \in B} t_{b;b'}b'$, for $t_{b;b'} \in q^{-1} \BZ[q^{-1}]$;
\item $B^\imath$ forms an $\CA$-basis of the $\CA$-lattice ${}_\CA M$, and forms an $\mathbf{A}$-basis of the $\mathbf{A}$-lattice $L(M)$;
\item $(M,B^\imath)$ is a based $\Ui$-module in the sense of \cite{BW21}*{Definition~6.11}.
\end{enumerate}

In particular, we have the based $\Ui$-modules $(V(\lambda), B(\lambda)^\imath)$ for each $\lambda \in X^+$. For any based $\U$- (resp., $\U^\imath$-) module $M$, we write $_\A M$ to be the $\A$-submodule of $M$ spanned by the canonical basis (resp., $\imath$canonical basis).

For any $\lambda \in X^+$, the based $\U$-submodule  $M[\ge \lambda] \subset M$ is also a based $\Ui$-submodule by \cite{BW21}*{Theorem~6.12} (with respect to different bases). Let $\phi:M \longrightarrow \bigoplus_{i =1}^n \bigoplus_{b \in B[\lambda_i]^{hi}} V(\lambda_i)$ be the isomorphism in Lemma~\ref{lem:Ka91}. We conclude that 

{\it (a) the based $\U$-module $(M,B)$ admits a based filtration of based $\Ui$-modules
\begin{equation*} 
0 = M[\ge \lambda_0] \subset M[\ge \lambda_1] \subset M[\ge \lambda_2] \subset \cdots \subset M[\ge \lambda_n] = M, \quad \text{for }\lambda_i \in X^+,
\end{equation*}
such that 
$\phi_{\lambda_i}:M[\ge \lambda_i]/M[\ge \lambda_{i-1}] \overset{\sim}{\rightarrow} \bigoplus_{b \in B[\lambda_i]^{hi}} V(\lambda_i)$ as based $\Ui$-modules for $1 \le i \le n$; 
}

{\it (b)
for any $b \in B[\lambda_j]$, we have $\phi(b^\imath) = \phi_{\lambda_j}(b^\imath) + q^{-1}\phi(L(M[\ge \lambda_{j-1}]))$.
}

\subsubsection{Spherical modules}\label{sec:spherical}

 The following theorem was conjectured in \cite{BW18a}*{Remark~6.18} and proved in \cite{Wa23}*{Theorem~4.3.1}. 
 
We define $X^+_\imath = \{\lambda \in X^+ \vert \overline{\lambda} = \overline{0} \in X_\imath\}$.

\begin{thm} \label{thm:Vlabased}
    Let $\lambda \in X^+_\imath$. Then there exists a unique morphism as based $\Ui$-modules
    \[
     V(\lambda) \rightarrow V(0), \qquad v^+_\lambda \mapsto v^+_0.
    \]
\end{thm}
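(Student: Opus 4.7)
The plan is to prove uniqueness by an elementary generation argument and then existence by constructing a $\Ui$-coinvariant functional and verifying its compatibility with the $\imath$canonical bases.

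For uniqueness, I would show that $V(\lambda)$ is generated as a $\Ui$-module by the highest weight vector $v^+_\lambda$. For $i \in I_\bullet$ we have $F_i \in \Ui$ directly. For $i \in I_\circ$, the element $B_i = F_i + \varsigma_i T_{w_\bullet}(E_{\tau i})\tilde{K}_i^{-1}$ acts on any weight vector of weight $\mu$ as $F_i$ plus a correction in strictly higher weight spaces, since $T_{w_\bullet}(E_{\tau i})\tilde{K}_i^{-1}$ has positive weight. A descending induction on $\mu \le \lambda$ then recovers every weight space $V(\lambda)_\mu$ from $v^+_\lambda$ by iterated application of the $\Ui$-generators. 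Hence any $\Ui$-equivariant map out of $V(\lambda)$ is determined by its value on $v^+_\lambda$.

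For existence, a $\Ui$-module map $V(\lambda) \to V(0)$ is the same as a $\Ui$-coinvariant linear functional on $V(\lambda)$. The hypothesis $\overline{\lambda} = \overline{0} \in X_\imath$ is the quantum sphericity condition, and I would use it to produce a nonzero such functional $\chi$ (via e.g.\ Letzter's theory of spherical vectors in quantum symmetric pairs), normalized so that $\chi(v^+_\lambda) = v^+_0$. The remaining and subtle task is to prove $\chi$ is based, namely $\chi(b^\imath) \in \{0, v^+_0\}$ for every $b^\imath \in B(\lambda)^\imath$. Bar-invariance of both $b^\imath$ and $v^+_0$, combined with the $\CA$-integrality of the $\imath$canonical basis, already forces $\chi(b^\imath) \in \BZ \cdot v^+_0$; ruling out integer values other than $0$ and $1$ is the genuine content.

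The main obstacle is this last basedness step, which is precisely what is established in \cite{Wa23}*{Theorem 4.3.1} using positivity and asymptotic properties of $\imath$canonical bases. The specific parameters $(\varsigma_i)_{i\in I_\circ}$ adopted in \S\ref{sec:iqp} are essential here: for generic parameters the coefficients $\chi(b^\imath)$ need not collapse to $\{0,1\}$. Wang's proof controls these coefficients by tensoring with high-weight modules $V(\nu)$ and invoking stability of $\imath$canonical bases as $\nu$ grows, extracting the leading behavior to force the desired dichotomy. An alternative strategy would be induction on $\lambda \in X^+_\imath$, writing $\lambda = \lambda' + (\mu - \theta\mu)$ with $\lambda' \in X^+_\imath$ of smaller height and $\mu \in X^+$, and deducing the statement for $\lambda$ from that for $\lambda'$ via the based-module structure on tensor products; however, the coherence of the $\imath$canonical basis across tensor products returns one to the setting of \cite{Wa23}.
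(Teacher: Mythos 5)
The paper gives no proof of this statement: it is imported directly from Watanabe \cite{Wa23}*{Theorem~4.3.1} (conjectured in \cite{BW18a}*{Remark~6.18}), and the parameters $\varsigma_i$ in \S\ref{sec:iqp} are chosen precisely so that this result applies. Your scaffolding --- uniqueness via generation of $V(\lambda)$ by $v^+_\lambda$ over $\Ui$, existence reduced to basedness of a spherical functional, the $\{0,1\}$ dichotomy deferred to the stability theorem --- is a sound reconstruction that ultimately invokes the same external reference, so the two coincide in substance; note only that the argument you attribute to ``Wang'' is Watanabe's (Bao--Wang posed the conjecture, Watanabe proved it).
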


Note that $V(\lambda)$ is generated by $v_\lambda^+$ as a $\Ui$-module.
Hence for any $\lambda \in X^+$, there exists a non-zero map $V(\lambda) \rightarrow V(0)$ only if $\lambda \in X^+_\imath$. We draw the following conclusions.

{\it (a) For any $\lambda \in X^+$, there exists a unique morphism $V(\lambda) \rightarrow V(0)$ as based $\Ui$-modules
      if and only if  $\lambda \in X^+_\imath$.
}

{\it (b) For any $\lambda \in X^+$, we have 
      \[
      \dim \text{\rm Hom}_{\Ui\text{\rm -mod}}(V(\lambda), V(0)) = \begin{cases}1, &\text{if } \lambda \in X^+_\imath;\\
      0, &\text{otherwise}.
      \end{cases}  
      \]
     }
 
\subsubsection{Symmetric subgroups}\label{sec:sym}

We recall the construction of symmetric subgroups via $\imath$quantum groups  following \cite{BS22}*{\S3}.

Let $k$ be an algebraically closed field with characteristic not 2. Let $G_k$ be a reductive linear algebraic  group over $k$, and $\theta_k$ be an algebraic group involution on $G_k$. The pair $(G_k,\theta_k)$ is then called a \emph{symmetric pair}. The closed subgroup $K_k=G_k^{\theta_k}$ consisting of $\theta_k$-fixed elements in $G_k$ is called a \emph{symmetric subgroup}. By \cite{Spr87}, the classification of symmetric pairs is independent of the characteristic of $k$ (provided not $2$). By \cite{BS22}*{\S3}, one can associate an $\imath$root datum to such a symmetric pair independent of the characteristic of $k$ (provided not $2$).

On the other hand, starting with an $\imath$root datum. Let $\U$, $\U^\imath$, $\dot{\U}$, $\dot{\U}^\imath$, etc., be the various quantum algebras associated with the given datum following \S\ref{sec:qp} and \S\ref{sec:iqp}. Let $G_k$ be the reductive group associated with the underlying root datum defined over $k$. Set $$_k\dot{\U}=k\otimes_\A{}_\A\dot{\U}\qquad\text{ and }\qquad _k\dot{\U}^\imath=k\otimes_\A{}_\A\dot{\U}^\imath.$$ 
Let $_k\widehat{\U}$ be the space consisting of formal sums of $k$-linear combinations of canonical basis elements of $_k\dot{\U}$ defined in \cite{Lu07}*{\S1.1}. By \cite{BS22}*{Lemma 3.1}, the space $_k\dot{\U}^\imath$ is naturally a subspace of $_k\widehat{\U}$.
By \cite{Lu07}, one can identify elements in the coordinate ring $k[G_k]$ as $k$-linear forms on of a completion of the space $_k\dot{\U}$, that is,
    $k[G_k]\hookrightarrow \text{Hom}_k({}_k\widehat{\U},k)$.
We define 
$\mathcal{I}=\{f\in k[G_k]\mid f({}_k\dot{\U}^\imath)=0\}$.

\begin{thm}[\cite{BS22}*{Theorem 3.8}]\label{thm:ik}
    The subspace $\mathcal{I}$ is a reduced ideal of $k[G_k]$, defining a symmetric subgroup $K_k=G_k^{\theta_k}$. The $\imath$root datum associated with the pair $(G_k,\theta_k)$ is isomorphic to the one we start with.
\end{thm}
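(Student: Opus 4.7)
The plan is to establish that $\mathcal{I}$ is a Hopf ideal via the right coideal subalgebra property of $\U^\imath \subset \U$, and then to identify the vanishing locus $V(\mathcal{I})$ with $K_k = G_k^{\theta_k}$ by comparing $_k\dot{\U}^\imath$, viewed inside $_k\widehat{\U}$, with the distribution algebra of $K_k$ after specialization at $q=1$.

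First I would verify that $\mathcal{I}$ is a two-sided ideal. The key input is that $\U^\imath$ is a right coideal subalgebra of $\U$, namely $\Delta(\U^\imath) \subset \U^\imath \otimes \U$ (cf.\ \cite{Le03}, \cite{BW18a}). This property descends to the modified algebras and their $\A$-forms and survives specialization, so that the comultiplication on elements of $_k\dot{\U}^\imath$ lands (in a suitably completed sense) in $_k\dot{\U}^\imath \otimes {}_k\dot{\U}$. Given $f \in \mathcal{I}$, $g \in k[G_k]$, and $u \in {}_k\dot{\U}^\imath$, the multiplication in $k[G_k]$ satisfies $(fg)(u) = \langle f \otimes g, \Delta(u)\rangle$, which vanishes because $f$ kills the first tensor factor. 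A symmetry argument using $\omega$ or the left analogue handles $(gf)(u)$. Upgrading $\mathcal{I}$ to a Hopf ideal then requires only that $\varepsilon(\mathcal{I}) = 0$ (immediate from $1 \in {}_k\dot{\U}^\imath$, which follows since $K_0 \in \U^\imath$) and that $S({}_k\dot{\U}^\imath) \subset {}_k\dot{\U}^\imath$ in the appropriate completed sense, which I would check on the generators $B_i$, $E_i$, $F_i$ ($i \in I_\bullet$), and $K_\mu$ ($\mu \in Y^\imath$), or alternatively derive from compatibility of the antipode with the bar involution $\psi_\imath$.

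The crucial step is the identification $V(\mathcal{I}) = G_k^{\theta_k}$, which I would carry out by matching distribution algebras. Lusztig's construction identifies $_k\widehat{\U}$ with the completed distribution algebra of $G_k$ and realizes $k[G_k]$ as the linear forms supported on finite-dimensional quotients; the vanishing ideal of a closed subgroup then corresponds precisely to the forms annihilating its distribution algebra. I would show that at $q=1$ the generators $F_i, E_i$ ($i \in I_\bullet$), $K_\mu$ ($\mu \in Y^\imath$), and $B_i = F_i + \varsigma_i T_{w_\bullet}(E_{\tau i})\Tilde{K}_i^{-1}$ ($i \in I_\circ$) specialize to elements generating exactly $\dist(K_k)$ inside $\dist(G_k)$, where $\theta_k$ is the involution prescribed by $(I_\bullet, \tau)$ together with the parameters $(\varsigma_i)$ of \cite{Wa23}. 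Since $K_k$ is smooth for $\mathrm{char}\,k \neq 2$ by Springer, this equality of hyperalgebras forces $\mathcal{I}$ to equal the full defining ideal of $K_k$ and in particular to be reduced; the isomorphism of $\imath$root data follows from Springer's characteristic-independent classification of symmetric pairs.

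The main obstacle is this distribution algebra identification: one must carefully analyze the specialization of $B_i$ at $q=1$, track how $T_{w_\bullet}$ acts on generators, and verify that the particular normalization of $\varsigma_i$ yields the intended $\theta_k$ rather than merely a $G_k$-conjugate involution. A secondary subtlety is the completion, since $_k\dot{\U}^\imath$ embeds in $_k\widehat{\U}$ rather than in $_k\dot{\U}$, so the pairing and the coideal computation need the convergence framework of \cite{Lu07} together with \cite{BS22}*{Lemma 3.1}. Both issues can be addressed by exploiting the stability result Theorem~\ref{thm:Vlabased}, together with an induction on the rank that reduces to the rank-one quantum symmetric pairs where the specialization of $B_i$ can be computed explicitly.
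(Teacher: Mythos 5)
The paper does not prove this statement — it is imported verbatim from \cite{BS22}*{Theorem 3.8}, and the only commentary here is the remark that follows, explaining that the original quasi-split restriction can now be dropped thanks to Watanabe's proof of the stability result (Theorem~\ref{thm:Vlabased}). So there is no internal proof for your sketch to be measured against; what can be evaluated is internal coherence and plausibility.

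Your outline has the right shape at the top level: the coideal property $\Delta(\U^\imath)\subset\U^\imath\otimes\U$ does give the ideal property (and since $k[G_k]$ is commutative the left/right distinction you raise is moot), the Hopf-ideal upgrade works as you describe, and smoothness of $K_k$ from Springer is indeed the source of reducedness. But the core identification step, which you yourself flag as the main obstacle, is also where the argument as written has a genuine gap. First, the object dual to $k[G_k]$ in Lusztig's framework is ${}_k\widehat{\U}$, a completion of the modified form containing idempotents $1_\lambda$ for all weights; this is not the same thing as the hyperalgebra $\mathrm{Dist}(G_k)$ supported at the identity, so ``matching distribution algebras'' is not quite the right invariant to compute and does not directly give equality of ideals. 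Second, even granting an equality of hyperalgebras, the inference ``this forces $\mathcal{I}$ to equal the full defining ideal of $K_k$'' requires more: a hyperalgebra match only controls the formal neighborhood of the identity, so one still needs an argument bounding the component group and ruling out nilpotents in the quotient before concluding $\mathcal{I}=I_{K_k}$. Third, the claim that ``both issues can be addressed by exploiting Theorem~\ref{thm:Vlabased} together with an induction on the rank'' is asserted but not substantiated; the role of Theorem~\ref{thm:Vlabased} in this circle of ideas is structural (it provides the spherical vectors and hence controls the coinvariant spaces $V(\lambda)^\imath_\ast$, which governs the size of $\mathcal{I}$), not something that obviously feeds a rank reduction. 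In short, the plan is aimed at the right target, but the crucial identification step is sketched at a level of rigor well below what would be needed, and its stated remedy does not clearly close the gap.
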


\begin{rem}
We previously established Theorem~\ref{thm:ik} for quasi-split symmetric pairs in \cite{BS22}, since Theorem~\ref{thm:Vlabased} was only available for the quasi-split cases back then. Theorem~\ref{thm:Vlabased} was later  established by Watanabe \cite{Wa23} for all symmetric pairs (of finite type). Hence results in \cite{BS22}*{\S3} hold in this generality as well; cf. \cite{BS22}*{Remark~3.3}.
\end{rem}

\section{Bases for coinvariants}

\subsection{Coinvariant spaces}\label{sec:ast}
Recall the counit 
$\varepsilon$ of $\U$ in \S\ref{sec:def}. Set $\agUi=\U^\imath\cap\text{ker }\varepsilon$ to be the augmentation ideal of $\U^\imath$. Let $(M,B)$ be a finite-dimensional based $\U$-module. We then obtain the based $\Ui$-module $(M, B^\imath)$. Let $M_+=\U^{\imath,+}M$ be the $\U^\imath$-submodule of $M$. We define 
    \[
    B^\imath_\ast  = \{b^\imath \in B^\imath \vert b \in B[\lambda]^{hi} \text{ for some } \lambda \in X_\imath^+\} \subset B^\imath.
    \]

    \begin{thm}\label{thm:Fbased}
        The $\Ui$-submodule $M_+=\agUi M$ is a based $\Ui$-submodule of $M$ with the basis $\{b^\imath \in B^\imath \vert b^\imath \not \in B^\imath_\ast\}$.
        
   In particular, the space of $\Ui$-coinvariants $M^\imath_\ast = M / \agUi M$ is naturally a based $\Ui$-module, with the basis given by the image of elements in $B_*^\imath$. The natural projection $M \rightarrow M^\imath_\ast$ maps ${}_\CA M$ to ${}_\CA M^\imath_\ast$.
    \end{thm}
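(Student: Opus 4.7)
The plan is to induct along the based $\Ui$-filtration
\begin{equation*}
0 = M[\geq \lambda_0] \subset M[\geq \lambda_1] \subset \cdots \subset M[\geq \lambda_n] = M
\end{equation*}
from \S\ref{sec:basedUi}(a), reducing the statement to the case of a single simple subquotient $V(\lambda)$. For the base case $M = V(\lambda)$ I distinguish two subcases. If $\lambda \notin X^+_\imath$, Theorem~\ref{thm:Vlabased}(b) gives $\mathrm{Hom}_{\Ui}(V(\lambda), V(0)) = 0$; since $V(\lambda)/\agUi V(\lambda)$ is annihilated by $\agUi$ and hence a direct sum of copies of $V(0)$, this Hom-vanishing forces $V(\lambda)/\agUi V(\lambda) = 0$, so $\agUi V(\lambda) = V(\lambda)$ has the full $\imath$canonical basis $B(\lambda)^\imath = B(\lambda)^\imath \setminus B^\imath_\ast$. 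If $\lambda \in X^+_\imath$, let $\pi \colon V(\lambda) \to V(0)$ be the based $\Ui$-morphism of Theorem~\ref{thm:Vlabased}. Since $v^+_\lambda$ generates $V(\lambda)$ over $\Ui$ and $\Ui = \BQ(q) \cdot 1 \oplus \agUi$, we have $V(\lambda) = \BQ(q) v^+_\lambda + \agUi V(\lambda)$, so $\agUi V(\lambda)$ has codimension at most one; combined with $\agUi V(\lambda) \subseteq \ker \pi$ and surjectivity of $\pi$, this forces $\agUi V(\lambda) = \ker\pi$ of codimension exactly one. To identify the basis I then show $\pi(b^\imath) = 0$ for each $b \in B(\lambda) \setminus \{v^+_\lambda\}$, extracted from the fact that $\pi$ is based (so $\pi(b^\imath) \in \{0, v^+_0\}$) combined with the uniqueness in Theorem~\ref{thm:Vlabased} and the bar-invariance/triangularity of the $\imath$canonical basis from \cite{BW18a,BW21}.

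With the simple case in hand, I prove by induction on $j$ that $\agUi M[\geq \lambda_j]$ is a based $\Ui$-submodule with basis $\{b^\imath : b \in B \cap M[\geq \lambda_j],\ b^\imath \notin B^\imath_\ast\}$. The inductive step applies the right exact coinvariant functor $N \mapsto N/\agUi N$ to the based short exact sequence
\begin{equation*}
0 \to M[\geq \lambda_{j-1}] \to M[\geq \lambda_j] \to \bigoplus_{b \in B[\lambda_j]^{hi}} V(\lambda_j) \to 0
\end{equation*}
coming from \S\ref{sec:basedUi}(a) and combines it with the base case applied to each $V(\lambda_j)$-summand. The compatibility in \S\ref{sec:basedUi}(b), which says $\phi(b^\imath)$ agrees with $\phi_{\lambda_j}(b^\imath)$ modulo lower $q^{-1}$-contributions, is what lets me lift the $\imath$canonical basis of the top subquotient back into $M[\geq \lambda_j]$. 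Using Lemma~\ref{lem:Ka91} and semisimplicity of $\U$-modules over $\BQ(q)$, the dimension count $\dim M/\agUi M = \sum_{\lambda \in X^+_\imath} n_\lambda = |B^\imath_\ast|$ pins down the basis precisely. The consequences for $M/\agUi M$ then follow formally: the quotient of a based $\Ui$-module by a based submodule inherits a canonical based $\Ui$-structure with basis the image of $B^\imath_\ast$, and the natural projection $M \to M/\agUi M$ takes ${}_\CA M$ onto ${}_\CA(M/\agUi M)$ since it sends $\CA$-basis elements to $\CA$-basis elements or zero.

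The main obstacle is the spherical case of the base step, specifically the claim that $\pi(b^\imath) = 0$ for every $b \neq v^+_\lambda$. For basis elements of $\imath$weight $\neq \overline{0}$ in $X_\imath$ this is automatic since $\pi$ is $\Ui$-linear and $V(0)$ lives in $\imath$weight $\overline{0}$. For basis elements of $\imath$weight $\overline{0}$ other than $v^+_\lambda$, however, the argument must exclude the possibility that more than one $\imath$canonical basis element maps to $v^+_0$ under the unique based morphism $\pi$. I expect to handle this by exploiting uniqueness in Theorem~\ref{thm:Vlabased} together with the integral $\mathbf{A}$-form of the $\imath$canonical basis and the stability results of \cite{Wa23} underlying the construction of $\pi$.
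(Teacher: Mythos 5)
Your inductive strategy is a genuine reorganization of the argument, different in shape from the paper's, which instead constructs a single $\Ui$-module homomorphism $f \colon M \to F(M)$ onto the trivial module $F(M) = \bigoplus_{b^\imath \in B^\imath_\ast} \Qq\, e_{b^\imath}$, built as a $\BZ[q^{-1}]$-linear combination of compositions $f_i = \pi \circ p \circ \phi$ (with $\phi$ from Lemma~\ref{lem:Ka91}, $p$ a projection, $\pi$ from Theorem~\ref{thm:Vlabased}), verifies that $f$ is a based morphism, and identifies $\ker f = M_+$ by a dimension count. Your base case is set up correctly, and the obstacle you single out — that $\pi(b^\imath) = 0$ for every $b \neq v^+_\lambda$ — is exactly the content of the third line of the paper's display $(\heartsuit)$ (the case $j = i$, $b \notin B[\lambda_j]^{hi}$). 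You correctly observe that from $\pi$ being a based $\Ui$-morphism one only gets $\pi(b^\imath) \in \{0, v^+_0\}$ a priori (bar-invariance together with preservation of the $\CA$-form and the $\mathbf{A}$-lattice force the coefficient into $\{0,1\}$); excluding the value $v^+_0$ when $b \neq v^+_\lambda$ is the genuinely nontrivial point that both arguments rest on.

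The place where your outline is materially incomplete, and where it differs most from the paper, is the inductive step. Applying the right exact coinvariant functor to the short exact sequence $0 \to M[\ge\lambda_{j-1}] \to M[\ge\lambda_j] \to Q \to 0$ does not by itself yield the inclusion $\{b^\imath : b \in B[\lambda_j],\, b^\imath \notin B^\imath_\ast\} \subset \agUi M[\ge\lambda_j]$: the extension does not split compatibly with $\imath$canonical bases, and left exactness of $(\cdot)^\imath_\ast$ is itself a \emph{consequence} of this theorem (Proposition~\ref{prop:fc}(2)), so cannot be invoked here. Appealing to \S\ref{sec:basedUi}(b) points in the right direction but is not enough: the fact that the error term $\phi(b^\imath) - \phi_{\lambda_j}(b^\imath)$ lies in $q^{-1} L\big(\bigoplus_{l<j}V(\lambda_l)\big)$ by itself does not place $b^\imath$ inside $\agUi M[\ge\lambda_j]$. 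What actually closes the gap is combining this $q^{-1}$-estimate with the $\psi_\imath$-invariance of $\phi(b^\imath)$ and the integrality $\phi(b^\imath) \in {}_\CA M$, so that the coefficients of $\phi(b^\imath)$ on the $\imath$canonical highest weight vectors $(v^+_{\lambda_l})_{b'}$ with $\lambda_l \in X^+_\imath$ are simultaneously in $q^{-1}\BZ[q^{-1}]$ and bar-invariant, hence zero; then $\phi(b^\imath) \in \agUi\big(\bigoplus_l V(\lambda_l)\big) = \phi(\agUi M)$. This is precisely what the paper's $(\heartsuit)$ together with claims (b)--(d) deliver. Once your two gaps are filled, your route converges with the paper's computation; the paper's phrasing is arguably more economical because it runs the triangularity, bar-equivariance, and integrality checks once, as properties of a single morphism $f$, rather than rerunning them at each stage of an induction.
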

   We abuse notations and denote the based quotient by $(M^\imath_\ast, B_*^\imath)$, and write $({}_\CA M)^\imath_\ast = {}_\CA M^\imath_\ast$. We also write ${}_R M^\imath_\ast = R \otimes_{\A} {}_\CA M^\imath_\ast$ for any commutative $\A$-algebra $R$.
   
    \begin{proof}
    Let $M[\ge \lambda_1] \subset M[\ge \lambda_2] \subset \cdots \subset M[\ge \lambda_n]$ be the based filtration of $M$ as \S\ref{sec:basedUi} (a).  We define $F(M) = \oplus_{b^\imath \in B^\imath_\ast} \Qq e_{b^\imath}$ to be the trivial $\Ui$-module of dimension $|B^\imath_\ast|$. By definition, we have $u \cdot x = \varepsilon(u)x$ for any $u \in \Ui$ and $x \in F(M)$. It is a based $\U^\imath$-module with basis $\{e_{b^\imath}\mid b^\imath\in B^\imath_*\}$.

    {\it (a) There is a unique $\Ui$-module homomorphism $f:M \rightarrow F(M)$ such that 
    \[
        f: b^\imath \mapsto 
    \begin{cases}
            e_{b^\imath}, &\text{if } b \in B[\lambda_i]^{hi} \text{ with } \lambda_i \in X^+_\imath;\\
            0, &\text{if } b  \in B[\lambda_i]^{hi} \text{ with } \lambda_i \notin X^+_\imath.
    \end{cases}
    \]
    }

By \cite{BW18a}*{Lemma~6.2} and an straightforward induction on $n$, we see that $M$ is generated as an $\Ui$-module by $b^\imath$ for various $b \in B[\lambda_i]^{hi}$ with $1 \le i \le n$. This shows the uniqueness. It also follows that the integral form ${}_\mathcal{A} M$ is  generated by $b^\imath$ for various $b \in B[\lambda_i]^{hi}$ with $1 \le i \le n$ as an ${}_\mathcal{A}\Uidot$-module.

We next show the existence. We firstly define $\Ui$-module homomorphisms $f_i: M \rightarrow F(M)$ for $1 \le i \le n$ as follows: if $\overline{\lambda_i} \neq \overline{0} \in X_\imath$, we define $f_i$ as the zero map; if $\overline{\lambda_i} = \overline{0} \in X_\imath$, we define $f_i$ as the composition
\begin{equation}\label{eq:fi}
    f_i:  M \xrightarrow{\phi} \bigoplus_{j =1}^n \bigoplus_{b \in B[\lambda_j]^{hi}} V(\lambda_j) \xrightarrow{p} \bigoplus_{b \in  B[\lambda_i]^{hi}} V(\lambda_i) \xrightarrow{\pi} \bigoplus_{b \in  B[\lambda_i]^{hi}} \Qq e_{b^\imath} \subset F(M),
\end{equation}
where $\phi$ is the isomorphism from Lemma~\ref{lem:Ka91}, $p$ is the canonical projection, and $\pi$ is direct sum of the based $\U$-module morphisms defined in Theorem~\ref{thm:Vlabased}.

    Let $1 \le i,j \le n$ with $\overline{\lambda_i} = \overline{0} \in X_\imath$ and $b \in B[\lambda_j]$. By Lemma~\ref{lem:Ka91}, we have 
\begin{equation}\label{eq:heart:thm:Fbased}
 \tag{$\heartsuit$} f_i(b^\imath  ) = 
 \begin{cases}
    0, &\text{if } j < i;\\
    e_{b^\imath}, &\text{if } j = i \text{ and } b \in B[\lambda_j]^{hi};\\
    0, &\text{if } j = i \text{ and } b \not \in B[\lambda_j]^{hi};\\
    c_{b^\imath}e_{b^\imath}, &\text{if }i <j, \text{ for some }  c_{b^\imath} \in q^{-1} \BZ[q^{-1}].
 \end{cases}
 \end{equation}
 
Hence a suitable $\BZ[q^{-1}]$-linear combination of $f_i$'s will be the desired map $f$. 

{\it (b) The map $f$ commutes with the bar involution $\psi_\imath$, that is, $f (\psi_\imath(x)) = \psi_{\imath}(f(x))$ for any $x \in M$.}

Since $\{b^\imath  \vert  b \in B[\lambda_i]^{hi} \}$ is a set of  generators of $M$ as a $\Ui$-module, the claim follows immediately from the fact that they are $\psi_\imath$-invariant. 

{\it (c) The map $f$ preserves the $\mathbf{A}$-lattice, that is, $f(L(M)) = L(f(M))$.}

Since all three maps $\phi$, $p$, $\pi$ in \eqref{eq:fi} preserve the $\mathbf{A}$-lattices, the map $f_i$ preverses the $\mathbf{A}$-lattices.  Since $f$ is a $\BZ[q^{-1}]$-linear combination of $f_i$'s, the claim follows.

{\it (d) The map $f$ is a morphism of based $\Ui$-modules.} 

    Thanks to $(b)$ and $(c)$, it suffices to consider the induced map $f_{\infty}: L(M)/q^{-1}L(M) \rightarrow L(F(M))/ q^{-1} L(F(M))$ at $q =\infty$; cf. \cite{Lu94}*{\S27.1.5}. 

     Let $1 \le i,j \le n$ with $\overline{\lambda_i} = \overline{0} \in X_\imath$ and $b \in B[\lambda_j]$. At $q =\infty$, we obtain from \eqref{eq:heart:thm:Fbased} that
\[
 f_{i, \infty}(b^\imath  ) = 
 \begin{cases}
    0, &\text{if } j \neq i;\\
    e_{b^\imath}, &\text{if } j = i \text{ and } b \in B[\lambda_j]^{hi};\\
    0, &\text{if } j = i \text{ and } b \not \in B[\lambda_j]^{hi}.\\
 \end{cases}
\]
Since $f$ is a $\mathbb{Z}[q^{-1}]$-linear combination of $f_i$'s, claim (d) follows.

{\it (e) We have $\ker f = M_+$.}
    
We have $M_+ \subset \ker  f$, since the action of $\Ui$ on $F(M)$ is trivial, i.e., via the counit $\varepsilon$. Then it follows by dimension counting that $M_+ = \ker  f$.

Now the theorem is proved.
    \end{proof}

Retain the same notations from Theorem~\ref{thm:Fbased}. Let $_\A M_+$ be the $\A$-submodule of $M$  spanned by elements in $B^\imath-B^\imath_*$. Then $_\A M_+ = M_+ \cap {}_\A M$ is an $_\A\dot{\U}^\imath$-module. For any commutative $\A$-algebra $R$, set $_R M_+=R\otimes_{\A}{}_\A M_+$.

\subsection{Coinvariant functors} \label{sec:F}

For any based morphism $f: (M,B)\rightarrow (M',B')$ of finite-dimensional based $\U$-modules, we naturally have the based morphism $f: (M,B^\imath)\rightarrow (M',(B')^\imath)$ of  based $\Ui$-module. We then have the induced morphism $f^\imath_\ast: M^\imath_\ast \rightarrow (M')^\imath_\ast$ of $\Ui$-modules. 

\begin{prop}\label{prop:fc}
    \begin{enumerate}
    \item The morphism $f^\imath_\ast: M^\imath_\ast \rightarrow (M')^\imath_\ast$ is a based morphism of $\Ui$-modules. 
    Therefore $(\cdot )^\imath_\ast$ is a functor from the category of finite-dimensional based $\U$-modules to the category of finite-dimensional based $\Ui$-modules.  
    \item The functor $(\cdot )^\imath_\ast$ is an exact functor.
    \end{enumerate}
\end{prop}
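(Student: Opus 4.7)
The plan is to establish (1) by first promoting the based $\U$-module morphism $f$ to a based $\Ui$-module morphism and then checking compatibility with the distinguished subsets $B^\imath_\ast \subset B^\imath$. Part (2) will then follow from the identification, obtained in the proof of Theorem~\ref{thm:Fbased}, of $M^\imath_\ast$ with the trivial $\Ui$-module on the basis $B^\imath_\ast$.

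For the well-definedness of $f^\imath_\ast$, since $f$ is in particular a $\Ui$-module morphism we have $f(M_+) = f(\agUi M) \subset \agUi M' = M'_+$, so $f$ descends to the quotients. To see that the induced map is based, I would first argue that $f$ itself is a based $\Ui$-module morphism with respect to the $\imath$canonical bases. Indeed, the construction of $\psi_\imath$ in \cite{BW21}*{Theorem~6.12} is functorial in based $\U$-modules, so $f$ commutes with $\psi_\imath$; and as a based $\U$-module morphism, $f$ preserves the crystal lattice $L(\cdot)$. Applying $f$ to the defining expansion $b^\imath = b + \sum_{c} t_{b;c} c$ of \S\ref{sec:basedUi}, the resulting element is $\psi_\imath$-invariant, lies in $L(M')$, and is congruent to $f(b)$ modulo $q^{-1}L(M')$; by the uniqueness characterization of the $\imath$canonical basis this forces $f(b^\imath) = (f(b))^\imath$, with the convention that $0^\imath = 0$.

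The key step is to verify that $f(B^\imath_\ast) \subset (B')^\imath_\ast \cup \{0\}$. In view of the previous paragraph, this reduces to showing that $f(B[\lambda]^{hi}) \subset B'[\lambda]^{hi} \cup \{0\}$ for each $\lambda \in X^+_\imath$. As a based $\U$-module morphism, $f$ sends $M[\ge\lambda]$ into $M'[\ge\lambda]$ and $M[>\lambda]$ into $M'[>\lambda]$, so it induces a based morphism between the subquotients $M[\ge\lambda]/M[>\lambda]$ and $M'[\ge\lambda]/M'[>\lambda]$. Since both subquotients are direct sums of copies of $V(\lambda)$ with highest weight vectors labelled by $B[\lambda]^{hi}$ and $B'[\lambda]^{hi}$ respectively (see \S\ref{sec:based}~(a)), any based morphism between them must send each highest weight basis vector either to a highest weight basis vector or to zero, which is precisely what we need. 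Functoriality (identity to identity, composition to composition) is then immediate from the constructions.

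For part (2), I would use the identification $M^\imath_\ast \cong F(M)$ from the proof of Theorem~\ref{thm:Fbased}, under which the canonical basis corresponds to $\{e_{b^\imath} : b^\imath \in B^\imath_\ast\}$; in particular $\dim M^\imath_\ast = |B^\imath_\ast| = \sum_{\lambda \in X^+_\imath} |B[\lambda]^{hi}|$. Given a short exact sequence $0 \to M_1 \to M_2 \to M_3 \to 0$ of finite-dimensional based $\U$-modules, the based inclusion and based surjection partition $B_2$ into the image of $B_1$ and the preimage of $B_3$, and by the argument of the previous paragraph this partition respects each of the sets $B_i[\lambda]^{hi}$, hence respects the subsets $B^\imath_{i,\ast}$. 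Therefore $|B^\imath_{2,\ast}| = |B^\imath_{1,\ast}| + |B^\imath_{3,\ast}|$, which together with the evident injectivity of $(M_1)^\imath_\ast \to (M_2)^\imath_\ast$ and surjectivity of $(M_2)^\imath_\ast \to (M_3)^\imath_\ast$ (both guaranteed by part (1) at the level of bases) gives the exactness. The main technical obstacle throughout is the compatibility of based $\U$-module morphisms with the ``highest weight'' layer $B[\lambda]^{hi}$ of the based filtration; once this is handled via the subquotient argument, everything else follows from the explicit description of $(\cdot)^\imath_\ast$.
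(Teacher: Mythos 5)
Your proposal is correct and, for part (1), follows essentially the same route as the paper: deduce that $f$ is a based $\Ui$-morphism from the functoriality of $\psi_\imath$ (the paper takes this as known, citing \cite{BW18a}*{\S6} / \cite{BW21}, while you re-derive it via the uniqueness characterization of the $\imath$canonical basis --- both are fine), then show $f(B[\lambda]^{hi})\subset B'[\lambda]^{hi}\sqcup\{0\}$ so that $B^\imath_\ast$ is sent into $(B')^\imath_\ast\sqcup\{0\}$. Your justification of the $B[\lambda]^{hi}$-compatibility via the induced based morphism on the subquotients $M[\ge\lambda]/M[>\lambda]$ is a correct unpacking of the paper's shorthand ``by investigating the induced map at $q=\infty$.''

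For part (2) you take a genuinely different route. The paper observes that $(\cdot)^\imath_\ast = M\mapsto M/\agUi M$ is a quotient functor, hence right exactness is automatic, and then establishes left exactness by noting that an injective based morphism is injective on basis elements, so injective on $B^\imath_\ast$, hence $f^\imath_\ast$ is injective. You instead identify $M^\imath_\ast$ with the trivial module $F(M)$ and do a dimension count across the short exact sequence, which requires showing that the partition $B_2 = B_1 \sqcup B_3$ (in the identification of bases for a short exact sequence of based modules) respects the subsets $B_i[\lambda]^{hi}$. That claim is true, but it is stronger than what part (1) gives you directly: part (1) covers the forward-image direction, whereas here you also need that the $B_2$-preimage of every $b_3\in B_3[\lambda]^{hi}$ lies in $B_2[\lambda]^{hi}$. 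This does follow (the preimage $b_2$ has weight $\lambda$, lies in some $B_2[\lambda']$ with $\lambda'\le\lambda$ by compatibility with the $[\ge\cdot]$-filtration, and has weight $\le\lambda'$, forcing $\lambda'=\lambda$ and $b_2\in B_2[\lambda]^{hi}$), but you should spell it out rather than appeal to ``the argument of the previous paragraph,'' since that paragraph treats a different situation. The paper's approach sidesteps this extra bookkeeping entirely; your route, once the gap is filled, buys you the explicit cardinality identity $|B^\imath_{2,\ast}| = |B^\imath_{1,\ast}| + |B^\imath_{3,\ast}|$ as a byproduct.
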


\begin{proof}
We show part (1). Let $\lambda \in X^+$ and $b \in B[\lambda]^{hi}$. 
Since $f$ is a morphism of based $\U$-modules, we have $f(b) \in (B'[\lambda])^{hi} \sqcup \{0\}$ by investigating the induced map at $q = \infty$. Therefore, $f$ maps $B^\imath_\ast$ to $(B')^\imath_\ast \sqcup\{0\}$. The claim now follows from Theorem~\ref{thm:Fbased}.

    We show part (2). The right exactness is trivial. To show the left exactness, suppose we have an injective based $\U$-module homomorphism $f:(M,B)\rightarrow (M',B')$. Then $f$ is automatically an injective based $\Ui$-module homomorphism $f:(M,B^\imath)\rightarrow (M',(B')^\imath)$.    
    Hence $f(b)\neq 0$ for $b\in B^\imath$. In particular $f(b)\neq 0$ for $b\in B_*^\imath$. Since $f$ maps $B^{hi}[\lambda]$ to $B'[\lambda]^{hi}$ for any $\lambda \in X^+$, we see that $f$ maps  $B_*^\imath$ to $(B')_*^\imath$. We conclude that $f^\imath_\ast$ is injective, since it is injective on the set of basis. 
\end{proof}

Recall \S\ref{sec:modiU} that for $\lambda\in X^+$, we have the short exact sequence of based $\U$-modules  \[
0\longrightarrow\dot{\U}[\not<\lambda]/\dot{\U}[\not\le\lambda]\longrightarrow\dot{\U}/\dot{\U}[\not\le\lambda]\longrightarrow \dot{\U}/\dot{\U}[\not <\lambda]\longrightarrow 0.
\]
For any $\lambda \le \lambda '$ in $X^+$, we also have the short exact sequence of based $\U$-modules  \[
0\longrightarrow\dot{\U}[\not \le \lambda]/\dot{\U}[\not\le\lambda']\longrightarrow\dot{\U}/\dot{\U}[\not\le\lambda']\longrightarrow \dot{\U}/\dot{\U}[\not \le \lambda]\longrightarrow 0.
\]

\begin{cor}\label{cor:exact}
\begin{enumerate}
\item For any $\lambda \in X^+$, we have the following short exact sequence of based $\U^\imath$-modules 
\begin{equation*}
0\longrightarrow  (\dot{\U}[\not<\lambda]/\dot{\U}[\not\le\lambda])^\imath_\ast \longrightarrow  (\dot{\U}/\dot{\U}[\not\le\lambda])^\imath_\ast \longrightarrow  (\dot{\U}/\dot{\U}[\not <\lambda])^\imath_\ast \longrightarrow 0.
\end{equation*}
\item For any $\lambda \le \lambda '$ in $X^+$, we have the short exact sequence of based $\Ui$-modules  \[
0\longrightarrow 
(\dot{\U}[\not \le \lambda]/\dot{\U}[\not\le\lambda'])^\imath_\ast
\longrightarrow
(\dot{\U}/\dot{\U}[\not\le\lambda'])^\imath_\ast
\longrightarrow (\dot{\U}/\dot{\U}[\not \le \lambda])^\imath_\ast
\longrightarrow 0.
\]
\item For any $\lambda \in X^+$, we have $(\dot{\U}/\dot{\U}[\not\le\lambda])^\imath_\ast \neq(\dot{\U}/\dot{\U}[\not <\lambda])^\imath_\ast$ if and only if $\lambda \in X^+_\imath$.
\end{enumerate} 
\end{cor}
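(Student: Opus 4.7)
The plan is to derive parts (1) and (2) from the exactness of the coinvariant functor $(\cdot)^\imath_\ast$ (Proposition~\ref{prop:fc}(2)) and then to deduce part (3) by analyzing the cell subquotient $\dot{\U}[\not<\lambda]/\dot{\U}[\not\le\lambda]$ using Theorem~\ref{thm:Vlabased}.

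For parts (1) and (2), both input sequences are exact sequences of based $\U$-modules (recalled in the lead-in to the corollary), so I would simply apply $(\cdot)^\imath_\ast$ term-by-term. The only wrinkle is that Proposition~\ref{prop:fc} is formally stated for finite-dimensional based $\U$-modules, whereas the quotients $\dot{\U}/\dot{\U}[\not\le\lambda]$ are infinite-dimensional. I would handle this by observing that the construction of $M_+$ and the distinguished basis $B^\imath_\ast$ in Theorem~\ref{thm:Fbased} is purely local in the based filtration $\{M[\ge\mu]\}$: $M_+$ is always the span of $B^\imath-B^\imath_\ast$ and $M^\imath_\ast$ has basis $B^\imath_\ast$. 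Hence the argument for Proposition~\ref{prop:fc}(2) carries over verbatim; alternatively, one can write each of these modules as a filtered union of finite-dimensional based submodules and invoke the commutativity of $(\cdot)^\imath_\ast$ with filtered colimits, which is immediate from its basis description.

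For part (3), the short exact sequence from (1) shows that $(\dot{\U}/\dot{\U}[\not\le\lambda])^\imath_\ast$ and $(\dot{\U}/\dot{\U}[\not<\lambda])^\imath_\ast$ coincide if and only if the cell coinvariant $(\dot{\U}[\not<\lambda]/\dot{\U}[\not\le\lambda])^\imath_\ast$ vanishes, reducing the question to computing the latter. By Lusztig's cell theorem (\cite{Lu94}*{\S29.3}), or equivalently by applying Lemma~\ref{lem:Ka91} to this cell (whose basis $\dot{\RB}[\lambda]$ has all based highest weights equal to $\lambda$), the subquotient decomposes as a based left $\U$-module into a non-empty direct sum of copies of $V(\lambda)$ indexed by $\dot{\RB}[\lambda]^{hi}$. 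Combining Theorem~\ref{thm:Fbased} with Theorem~\ref{thm:Vlabased}, each summand $V(\lambda)$ contributes a one-dimensional coinvariant exactly when $\lambda \in X^+_\imath$ and zero otherwise; since the indexing set is non-empty, the cell coinvariant is non-zero precisely in that case, yielding (3). The main obstacle is purely the bookkeeping required to extend the coinvariant machinery of Section~3 to these infinite-dimensional quotients; once verified, the remainder is a direct consequence of Lusztig's cell structure and Theorem~\ref{thm:Vlabased}.
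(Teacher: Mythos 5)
Your argument is correct and follows the paper's own route: parts (1) and (2) by applying the exactness of $(\cdot)^\imath_\ast$ from Proposition~\ref{prop:fc}(2), and part (3) by identifying the cell $\dot{\U}[\not<\lambda]/\dot{\U}[\not\le\lambda]$ with a non-empty direct sum of copies of $V(\lambda)$ and invoking Theorem~\ref{thm:Vlabased} (via \S\ref{sec:spherical}). One side remark: the ``wrinkle'' you raise is spurious --- $\dot{\U}/\dot{\U}[\not\le\lambda]$ \emph{is} finite-dimensional (the set $\{\lambda'\in X^+ : \lambda'\le\lambda\}$ is finite in finite type, and each $\dot{\RB}[\lambda']$ is finite; the paper records this explicitly in \S\ref{sec:BKG}), so Proposition~\ref{prop:fc} applies directly with no extension needed.
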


\begin{proof}
The short exact sequences in part (1) and (2) follow from the exactness of the functor $(\cdot )^\imath_\ast$ in Proposition~\ref{prop:fc}.

If follows from \S\ref{sec:basedUi} that $\dot{\U}[\not<\lambda]/\dot{\U}[\not\le\lambda] \cong \bigoplus_{b \in \RB[\lambda]^{hi}} V(\lambda)$. Then by \S\ref{sec:spherical}, we see  $(\dot{\U}[\not<\lambda]/\dot{\U}[\not\le\lambda])^\imath_\ast  \neq 0$ if and only if $\lambda \in X^+_\imath$. Then part (3) follows from the short exact sequence in (1) now.
\end{proof}

    
\section{Coordinate rings of symmetric spaces}
\subsection{The quantum coordinate ring $\mathbf{O}(K \backslash G)$}

 Let $\mathbf{O}(K \backslash G)$ be the $\Qq$-subspace of $\mathbf{O}$ consisting of linear forms on $\Udot$ that vanish on the subspace $\agUi \Udot$. 

\begin{prop}\label{prop:pc}
    \begin{enumerate} 
    \item The subspace $\mathbf{O}(K \backslash G)$ is closed under multiplication. 
    \item The coproduct $\Delta$ of $\mathbf{O}$ restricts to $\Delta:\mathbf{O}(K \backslash G)\rightarrow \mathbf{O}(K \backslash G) \otimes \mathbf{O}$. 
    \end{enumerate}
 
\end{prop}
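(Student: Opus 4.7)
The plan is to derive both parts from the coideal property of $\Ui$ inside $\U$: under the coproduct convention of \cite{Lu94} adopted in the paper, $\Delta(\Ui) \subset \Ui \otimes \U$, which is built into the construction of $\Ui$ in \cite{BW18a}. The intermediate technical claim I would establish first is
\[
 \Delta(\agUi \Udot) \;\subset\; \agUi \Udot \otimes \Udot \;+\; \Udot \otimes \agUi \Udot.
\]
To prove it, I take $u \in \agUi$, write $\Delta(u) = \sum u_{(1)} \otimes u_{(2)}$ with $u_{(1)} \in \Ui$, and apply the counit identity $(\varepsilon \otimes \mathrm{id})\Delta(u) = u$ to split
\[
\Delta(u) \;=\; \sum \bigl(u_{(1)} - \varepsilon(u_{(1)}) \cdot 1\bigr) \otimes u_{(2)} \;+\; 1 \otimes u .
\]
The first summand lies in $\agUi \otimes \U$ since $u_{(1)} - \varepsilon(u_{(1)}) \in \Ui \cap \ker\varepsilon = \agUi$, and the second lies in $\Ui \otimes \agUi$ as $u \in \agUi$. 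Multiplying on the right by $\Delta(x)$ for any $x \in \Udot$ then yields the intermediate claim.

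For part (1), I take $f_1, f_2 \in \mathbf{O}(K \backslash G)$ and $z \in \agUi \Udot$. By definition of the multiplication on $\mathbf{O}$ (dual to the coproduct on $\Udot$), $(f_1 f_2)(z) = (f_1 \otimes f_2)(\Delta(z))$. The intermediate claim places $\Delta(z)$ inside $\agUi \Udot \otimes \Udot + \Udot \otimes \agUi \Udot$, on which $f_1 \otimes f_2$ vanishes: the first summand is killed by $f_1$ in the left tensor factor, and the second by $f_2$ in the right. Hence $f_1 f_2 \in \mathbf{O}(K \backslash G)$.

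For part (2), I write $\Delta(f) = \sum_i g_i \otimes h_i \in \mathbf{O} \otimes \mathbf{O}$ with the $h_i$ linearly independent. The trivial inclusion $\agUi \Udot \cdot \Udot \subset \agUi \Udot$ says that $\agUi \Udot$ is a left ideal of $\Udot$, so for $u \in \agUi \Udot$ and $y \in \Udot$ we have $uy \in \agUi \Udot$, whence
\[
\sum_i g_i(u) h_i(y) \;=\; (\Delta f)(u \otimes y) \;=\; f(uy) \;=\; 0 .
\]
Varying $y \in \Udot$ and invoking the linear independence of $\{h_i\}$ (together with the fact that $\Udot$ separates points of $\mathbf{O}$) forces $g_i(u) = 0$ for every $i$ and every $u \in \agUi \Udot$, so each $g_i \in \mathbf{O}(K \backslash G)$ and $\Delta(f) \in \mathbf{O}(K \backslash G) \otimes \mathbf{O}$.

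The only genuine obstacle is the coideal property $\Delta(\Ui) \subset \Ui \otimes \U$ itself, but this is standard for quantum symmetric pairs: it can be checked directly on the generators $B_i$, $E_j$, $F_j$ ($i \in I_\circ$, $j \in I_\bullet$), $K_\mu$ ($\mu \in Y^\imath$) of $\Ui$ using Lusztig's explicit coproduct formulas together with the defining expression $B_i = F_i + \varsigma_i T_{w_\bullet}(E_{\tau i}) \tilde{K}_i^{-1}$. Once this ingredient is in place, both parts of Proposition~\ref{prop:pc} are formal consequences of the Hopf-theoretic duality between $\mathbf{O}$ and $\Udot$.
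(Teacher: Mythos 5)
Your proof is correct, and for part (2) it is essentially the paper's argument written out in full: the key point is that $\agUi\Udot$ absorbs right multiplication by $\Udot$ (you call it a \emph{left} ideal; with the standard convention $\agUi\Udot\cdot\Udot\subset\agUi\Udot$ makes it a \emph{right} ideal, which is what the paper says, but the mathematics is the same).

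For part (1) you take a genuinely different and cleaner route to the intermediate inclusion $\Delta(\agUi)\subset\agUi\otimes\U+\U\otimes\agUi$. The paper reduces to checking $\Delta(x)$ on the generators $B_i$, $E_j$, $K_\mu-1$ of the two-sided ideal $\agUi\subset\Ui$. That reduction is less immediate than it looks: $\agUi\otimes\U+\U\otimes\agUi$ is not a two-sided ideal of $\Ui\otimes\U$ (the right-hand summand $\U\otimes\agUi$ is not stable under $\Ui\otimes\U$ acting by multiplication, because $\U\agUi\U\not\subset\agUi$), so passing from generators $x$ to general elements $axb$ requires additional care. Your counit trick sidesteps the whole issue: writing $\Delta(u)=\sum u_{(1)}\otimes u_{(2)}$ with $u_{(1)}\in\Ui$ and subtracting $(\varepsilon\otimes\mathrm{id})\Delta(u)=u$ gives directly $\Delta(u)-1\otimes u\in\agUi\otimes\U$, hence the stronger containment $\Delta(\agUi)\subset\agUi\otimes\U+1\otimes\agUi$, with no induction over ideal generators at all. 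The only input is the coideal property $\Delta(\Ui)\subset\Ui\otimes\U$, which you correctly flag as a standard structural fact about quantum symmetric pairs in the conventions of \cite{BW18a}; the paper's proof implicitly relies on the same ingredient. Net effect: your argument buys a shorter, more conceptual proof of the inclusion, at the cost of explicitly invoking the coideal property that the paper leaves in the background.
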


\begin{proof}
Part (1) follows if we can show 
$
    \Delta(\agUi) \subset \agUi \otimes \U+ \U \otimes \agUi.
$
Since the subspace $\agUi$ is the two-sided ideal of $\U^\imath$, generated by $B_i$ ($i\in I$), $E_i$ ($i\in I_\bullet$) and $K_\mu-1$ ($\mu\in Y^\imath$), it will suffice to check $\Delta(x)\in \agUi \otimes \U+ \U \otimes \agUi$, when $x$ is a generator. This is direct and will be skipped. Part (2) follows from the fact that $\agUi\dot{\U}$ is a right ideal of $\dot{\U}$
\end{proof}

Let ${}_\CA \mathbf{O}(K \backslash G) = \mathbf{O}(K \backslash G) \cap {}_\CA \mathbf{O}$. Thanks to Proposition \ref{prop:pc} (1), we see that $_\CA\mathbf{O}(K \backslash G)$ is an $\CA$-subalgebra of $_\CA\mathbf{O}$. For any commutative $\CA$-algebra $R$, we write ${}_R \mathbf{O}(K \backslash G) = R \otimes_{\CA} {}_\CA \mathbf{O}(K \backslash G)$.
For any $\lambda \in X^+$, we define $\mathbf{O}(K \backslash G)_{\le \lambda} = \mathbf{O}(K \backslash G) \cap \mathbf{O}_{\le \lambda}$ and ${}_\CA \mathbf{O}(K \backslash G)_{\le \lambda} = \mathbf{O}(K \backslash G) \cap {}_\CA\mathbf{O}_{\le \lambda}$. We similarly define $\mathbf{O}(K \backslash G)_{< \lambda}$ and ${}_\CA \mathbf{O}(K \backslash G)_{< \lambda}$.

\subsection{Canonical basis on $\mathbf{O}(K \backslash G)$}\label{sec:BKG}
For any  $\lambda \in X^+$, we have the finite-dimensional based $\U$-module $(\Udot/ \Udot[\not \le \lambda], \dot{\RB}[\le \lambda])$ by \S\ref{sec:modiU}. Then $\Udot/ \Udot[\not \le \lambda]$ is naturally a based $\Ui$-module with basis $\dot{\RB}[\le \lambda]^\imath$. 

We define $\dot{\RB}[\le \lambda]^\imath_\ast$ following \S\ref{sec:ast}. For any $a \in \dot{\RB}[\le \lambda]$, and hence $a^\imath \in \dot{\RB}[\le \lambda]^\imath$, we define the $\Qq$-linear form 
\[
\widetilde{a}^\imath : \Udot/ \Udot[\not \le \lambda] \rightarrow \Qq,  \quad \text{such that }\widetilde{a}^\imath(b^\imath) = \delta_{a^\imath, b^\imath} \text{ for all } b^\imath \in \dot{\RB}[\le \lambda]^\imath.
\]
Then we see that $\{\widetilde{a}^\imath \vert a^\imath \in \dot{\RB}[\le \lambda]^\imath\}$ forms a $\Qq$-basis of $\mathbf{O}_{\le \lambda}$ and an $\CA$-basis of ${}_\CA \mathbf{O}_{\le \lambda}$.

Let $\lambda, \lambda' \in X^+$ be such that $\lambda' \ge \lambda$. The natural projection
$\Udot/\Udot[\not \le \lambda'] \rightarrow \Udot/\Udot[\not \le \lambda]$ is a morphism of based $\U$-modules. It is then automatically a morphism of based $\Ui$-modules by \cite{BW18a}*{\S6}
\[
 (\Udot/\Udot[\not \le \lambda'], \dot{\RB}[\le \lambda']^\imath) \rightarrow (\Udot/\Udot[\not \le \lambda], \dot{\RB}[\le \lambda]^\imath ).
\]
Therefore 

{\it (a) the union $\cup_{\lambda \in X^+}\{\widetilde{a}^\imath \vert a^\imath \in \dot{\RB}[\le \lambda]^\imath\}$ forms a $\Qq$-basis of $\mathbf{O}$ and an $\CA$-basis of ${}_\CA \mathbf{O}$.
}

\begin{rem}
There is no natural $\imath$canonical basis $\dot{\RB}^\imath$ on the whole modified form $\Udot$. One could consider a topological basis via certain completion similar to \cite{Lu94}*{Chapter 30}. However, due the finiteness assumption on the elements in $\mathbf{O}$, we do naturally have the dual basis $\cup_{\lambda \in X^+}\{\widetilde{a}^\imath \vert a^\imath \in \dot{\RB}[\le \lambda]^\imath\}$. 
\end{rem}

\begin{lem}\label{lem:free1}
    The set $\RB(K\backslash G)_{\le\lambda}=\{\widetilde{a}^\imath \vert a^\imath \in \dot{\RB}[\le \lambda]^\imath_\ast\}$ is a $\Qq$-basis of $\mathbf{O}(K \backslash G)_{\le \lambda}$ and an $\CA$-basis of ${}_\CA \mathbf{O}(K \backslash G)_{\le \lambda}$. 
    In particular, ${}_\CA \mathbf{O}(K \backslash G)_{\le \lambda}$ is a free $\CA$-module. 
    
    Similar statements hold for $ \mathbf{O}(K \backslash G)_{< \lambda}$ and $_\CA \mathbf{O}(K \backslash G)_{< \lambda}$.
\end{lem}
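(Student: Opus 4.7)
The plan is to identify $\mathbf{O}(K\backslash G)_{\le\lambda}$ with the linear dual of the based coinvariant module $M^\imath_\ast$ supplied by Theorem~\ref{thm:Fbased}, where $M=\Udot/\Udot[\not\le\lambda]$ is the finite-dimensional based $\U$-module with basis $\dot{\RB}[\le\lambda]$, and then read off the basis.

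Since $\mathbf{O}_{\le\lambda}=\text{Hom}_{\Qq}(M,\Qq)$ by definition, the subspace $\mathbf{O}(K\backslash G)_{\le\lambda}$ is precisely the set of $\Qq$-linear forms on $M$ that vanish on the image of $\agUi\Udot$ in $M$. That image equals $\agUi\cdot M = M_+$, since $M$ is a unital $\Udot$-module. Consequently, the projection $\pi\colon M\twoheadrightarrow M^\imath_\ast$ induces a canonical isomorphism $\mathbf{O}(K\backslash G)_{\le\lambda}\cong\text{Hom}_{\Qq}(M^\imath_\ast,\Qq)$. By Theorem~\ref{thm:Fbased}, $M^\imath_\ast$ has $\Qq$-basis $\pi(\dot{\RB}[\le\lambda]^\imath_\ast)$; moreover, each form $\widetilde{a}^\imath$ with $a^\imath\in\dot{\RB}[\le\lambda]^\imath_\ast$ vanishes on $\ker\pi$ (which is spanned by $\dot{\RB}[\le\lambda]^\imath\setminus\dot{\RB}[\le\lambda]^\imath_\ast$), and therefore descends to $M^\imath_\ast$ as a member of the basis dual to $\pi(\dot{\RB}[\le\lambda]^\imath_\ast)$. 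This gives the $\Qq$-basis claim.

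For the $\CA$-basis, I would use that $_\CA\mathbf{O}_{\le\lambda}=\text{Hom}_\CA({}_\CA M,\CA)$. Theorem~\ref{thm:Fbased} further asserts that $\pi$ restricts to a surjection ${}_\CA M\twoheadrightarrow{}_\CA M^\imath_\ast$, the target being $\CA$-free on $\dot{\RB}[\le\lambda]^\imath_\ast$. A form $f\in\mathbf{O}(K\backslash G)_{\le\lambda}$ therefore lies in $_\CA\mathbf{O}_{\le\lambda}$ if and only if its factorization through $M^\imath_\ast$ takes $\CA$-values on ${}_\CA M^\imath_\ast$; hence ${}_\CA\mathbf{O}(K\backslash G)_{\le\lambda}=\text{Hom}_\CA({}_\CA M^\imath_\ast,\CA)$, and $\RB(K\backslash G)_{\le\lambda}$ is its dual $\CA$-basis, yielding freeness and the desired rank.

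The argument for $\mathbf{O}(K\backslash G)_{<\lambda}$ and $_\CA\mathbf{O}(K\backslash G)_{<\lambda}$ is identical after replacing the based $\U$-module $\Udot/\Udot[\not\le\lambda]$ by $\Udot/\Udot[\not<\lambda]$. Given Theorem~\ref{thm:Fbased}, there is no real obstacle; the only mildly delicate step is the identification of ``forms on $\Udot$ vanishing on $\agUi\Udot$'' with ``forms on $M$ vanishing on $\agUi M$'', which is immediate from the fact that $M$ inherits its $\Ui$-action from left multiplication in $\Udot$, and the surjectivity of $\pi$ on $\CA$-forms, which is built into Theorem~\ref{thm:Fbased}. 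Everything else is routine dualization.
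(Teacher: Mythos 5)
Your proof is correct and takes essentially the same approach as the paper: you identify $\mathbf{O}(K\backslash G)_{\le\lambda}$ with $\mathrm{Hom}_{\Qq}\big((\Udot/\Udot[\not\le\lambda])^\imath_\ast,\Qq\big)$ and then read off the basis from Theorem~\ref{thm:Fbased}, which is precisely the paper's argument, with the $\CA$-form step and the identification of the image of $\agUi\Udot$ in the quotient spelled out in slightly more detail.
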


\begin{proof}
 Recall the exact functor $(\cdot)^\imath_\ast$ in \S\ref{sec:F}. By definition, we have 
 \begin{align*}
     \mathbf{O}(K \backslash G)_{\le \lambda} 
     & = \{f \in {\rm Hom}_{\Qq}\Big( (\Udot/ \Udot[\not \le \lambda] ), \Qq\Big) \mid f \Big(  (\Udot/ \Udot[\not \le \lambda])_+ \Big)=0\} \\
     &= {\rm Hom}_{\Qq}\Big( (\Udot/ \Udot[\not \le \lambda] )^\imath_\ast, \Qq \Big).
 \end{align*}
The lemma follows immediately by Theorem~\ref{thm:Fbased}.
\end{proof}

By dualizing Corollary~\ref{cor:exact}, we have the natural embeddings ($\text{for }\lambda \le \lambda'$ in $X^+$)
\begin{align*}
 \mathbf{O}(K \backslash G)_{\le \lambda} \rightarrow \mathbf{O}(K \backslash G)_{\le \lambda'} &\text{ mapping } B(K\backslash G)_{\le\lambda} \text{ into }B(K\backslash G)_{\le\lambda'},\\
 \mathbf{O}(K\backslash G)_{<\lambda}\rightarrow\mathbf{O}(K\backslash G)_{\le\lambda} &\text{ mapping } B(K\backslash G)_{< \lambda} \text{ into }B(K\backslash G)_{\le\lambda}.
\end{align*}
We set $\RB(K\backslash G)_\lambda=\RB(K\backslash G)_{\le\lambda}-\RB(K\backslash G)_{<\lambda}$.

\begin{thm}\label{thm:base}
    The set 
    \[
   \RB(K \backslash G)=  \bigcup_{\lambda \in X^+}  \RB(K\backslash G)_{\le\lambda}=\bigsqcup_{\lambda\in X^+}\RB(K\backslash G)_\lambda = \bigsqcup_{\lambda\in X_\imath^+}\RB(K\backslash G)_\lambda
    \]
    is a $\Qq$-basis of $ \mathbf{O}(K \backslash G)$ and an $\CA$-basis of ${}_\CA \mathbf{O}(K \backslash G)$. 
    
    Moreover, for any commutative $\CA$-algebra $R$ with $1$, the ring ${}_R \mathbf{O}(K \backslash G) =R\otimes_{\CA} {}_\CA \mathbf{O}(K \backslash G)$ is naturally a free $R$-submodule of $ {}_R \mathbf{O} =R\otimes_{\CA} {}_\A\mathbf{O}$.
\end{thm}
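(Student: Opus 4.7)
The plan is to assemble the theorem from Lemma~\ref{lem:free1}, which describes an $\CA$-basis of each filtration piece $_\CA\mathbf{O}(K \backslash G)_{\le \lambda}$, glued together using the compatibility provided by Corollary~\ref{cor:exact}. First I would observe that $\mathbf{O} = \bigcup_{\lambda\in X^+} \mathbf{O}_{\le\lambda}$ and $_\CA\mathbf{O} = \bigcup_{\lambda\in X^+} {}_\CA\mathbf{O}_{\le\lambda}$, together with the definitions $\mathbf{O}(K\backslash G)_{\le\lambda} = \mathbf{O}(K\backslash G) \cap \mathbf{O}_{\le\lambda}$ and $_\CA\mathbf{O}(K\backslash G)_{\le\lambda} = \mathbf{O}(K\backslash G)\cap {}_\CA\mathbf{O}_{\le\lambda}$, immediately force $\mathbf{O}(K\backslash G) = \bigcup_\lambda \mathbf{O}(K\backslash G)_{\le\lambda}$ and likewise over $\CA$. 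So the claim reduces to showing the bases $\RB(K\backslash G)_{\le\lambda}$ are compatible along the tower of inclusions.

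Next I would verify, for $\lambda \le \lambda'$ in $X^+$, that the natural inclusion $\mathbf{O}(K\backslash G)_{\le\lambda} \hookrightarrow \mathbf{O}(K\backslash G)_{\le\lambda'}$ sends $\RB(K\backslash G)_{\le\lambda}$ into $\RB(K\backslash G)_{\le\lambda'}$. This inclusion is $\CA$-dual to the based surjection of $\Ui$-modules $(\Udot/\Udot[\not\le\lambda'])^\imath_\ast \twoheadrightarrow (\Udot/\Udot[\not\le\lambda])^\imath_\ast$ furnished by Corollary~\ref{cor:exact}(2). Since a based surjection maps basis to basis-or-zero, dualizing carries the dual basis of the quotient to a subset of the dual basis of the source, which is exactly the desired inclusion of dual bases. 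Combined with Lemma~\ref{lem:free1}, this shows $\RB(K\backslash G) = \bigcup_\lambda \RB(K\backslash G)_{\le\lambda}$ is a $\Qq$-basis of $\mathbf{O}(K\backslash G)$ and an $\CA$-basis of $_\CA\mathbf{O}(K\backslash G)$.

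For the disjoint decomposition, I would invoke Lusztig's partition $\dot{\RB} = \bigsqcup_{\lambda\in X^+} \dot{\RB}[\lambda]$: each $\widetilde{a}^\imath \in \RB(K\backslash G)$ is indexed by a unique $a \in \dot{\RB}$, which belongs to exactly one $\dot{\RB}[\lambda]$, so $\widetilde{a}^\imath$ lies in $\RB(K\backslash G)_{\le\lambda}$ but not in $\RB(K\backslash G)_{<\lambda}$ for precisely that $\lambda$. For the restriction to $\lambda\in X_\imath^+$, I would use Corollary~\ref{cor:exact}(3): for $\lambda \notin X^+_\imath$ the map $(\Udot/\Udot[\not\le\lambda])^\imath_\ast \to (\Udot/\Udot[\not<\lambda])^\imath_\ast$ is an isomorphism, and $\CA$-dualizing yields $\mathbf{O}(K\backslash G)_{\le\lambda} = \mathbf{O}(K\backslash G)_{<\lambda}$, hence $\RB(K\backslash G)_\lambda = \emptyset$.

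Finally, for the \emph{moreover} statement, $\RB(K\backslash G)$ is a subset of the $\CA$-basis $\bigcup_\lambda\{\widetilde{a}^\imath : a^\imath \in \dot{\RB}[\le\lambda]^\imath\}$ of $_\CA\mathbf{O}$ recorded in \S\ref{sec:BKG}(a), so $_\CA\mathbf{O}(K\backslash G)$ is an $\CA$-free direct summand of $_\CA\mathbf{O}$; tensoring with any commutative $\CA$-algebra $R$ preserves both the $R$-freeness and the inclusion into $_R\mathbf{O}$. The only real subtlety is tracking the compatibility of bases across the tower of filtration pieces — at each level the basis is defined via the $\Ui$-action on a different based quotient of $\Udot$, and one must confirm these glue coherently as $\lambda$ varies, which is exactly what the exact sequences of Corollary~\ref{cor:exact}, via functoriality of $(\,\cdot\,)^\imath_\ast$ (Proposition~\ref{prop:fc}), guarantee.
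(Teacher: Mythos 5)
Your proposal is correct and follows essentially the same route as the paper: the compatibility of the bases $\RB(K\backslash G)_{\le\lambda}$ along the tower is obtained by dualizing the based surjections from Corollary~\ref{cor:exact}(2) (as the paper records in the paragraph before the theorem), Lemma~\ref{lem:free1} gives freeness of each piece, Corollary~\ref{cor:exact}(3) restricts the index set to $X_\imath^+$, and the subset-of-basis argument yields the freeness after base change. You simply spell out in more detail the disjointness argument via Lusztig's partition of $\dot{\RB}$, which the paper leaves implicit in the phrase ``the first claim follows immediately.''
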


\begin{proof}
 We have $\RB(K\backslash G)_\lambda \neq \emptyset$ if and only if $\lambda \in X^+_\imath$ by Corollary~\ref{cor:exact}.  The first claim follows immediately.

We know ${}_\CA \mathbf{O}(K \backslash G) \subset {}_\CA \mathbf{O}$ be definition. Since ${}_\CA \mathbf{O}(K \backslash G)$ is spanned by the subset 
$\RB(K \backslash G)$ of the $\CA$-basis $\cup_{\lambda \in X^+}\{\widetilde{a}^\imath \vert a^\imath \in \dot{\RB}[\le \lambda]^\imath\}$ of ${}_\CA \mathbf{O}$, the injectivity is clearly preserved after the base change $R \otimes_{\CA} -$.
\end{proof}
We call $\RB(K \backslash G)$ the canonical basis of $\mathbf{O}(K \backslash G)$. 

\begin{cor}\label{cor:coi}
    The coproduct of ${}_\A\mathbf{O}$ induces the coproduct on the $\A$-forms 
    \begin{equation}
        _\A\mathbf{O}(K\backslash G)\longrightarrow {_\A\mathbf{O}}(K\backslash G)\otimes_\A {}_\A\mathbf{O}.
    \end{equation}
\end{cor}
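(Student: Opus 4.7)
The plan is to reduce Corollary~\ref{cor:coi} to the rational statement already proved in Proposition~\ref{prop:pc}(2), by exhibiting ${}_\A\mathbf{O}(K\backslash G)$ as an $\A$-direct summand of ${}_\A\mathbf{O}$. Once this summand structure is in place, the integrality of $\Delta$ on ${}_\A\mathbf{O}$ (from \S\ref{sec:qp}, where ${}_\A\mathbf{O}$ is a Hopf algebra over $\A$) combined with the $\Qq$-level compatibility will immediately yield the desired statement.

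First I would set up the decomposition. By the observation \S\ref{sec:BKG}\,(a), the family $\{\widetilde{a}^\imath\}$, indexed by $a \in \dot{\RB}$ (via compatible choices for large enough $\lambda$), forms an $\A$-basis of ${}_\A\mathbf{O}$. By Theorem~\ref{thm:base}, the canonical basis $\RB(K\backslash G)$ of ${}_\A\mathbf{O}(K\backslash G)$ is precisely the subset of this basis indexed by those $a$ with $a^\imath \in \dot{\RB}[\le\lambda]^\imath_\ast$ for some (equivalently, all sufficiently large) $\lambda$. Letting ${}_\A\mathbf{N}$ denote the $\A$-span of the complementary basis elements, I obtain an $\A$-module decomposition
\[
{}_\A\mathbf{O} = {}_\A\mathbf{O}(K\backslash G) \oplus {}_\A\mathbf{N}.
\]
Applying $-\otimes_\A {}_\A\mathbf{O}$ and, separately, extending scalars to $\Qq$, I obtain the compatible direct-sum decompositions
\[
{}_\A\mathbf{O} \otimes_\A {}_\A\mathbf{O} = \bigl({}_\A\mathbf{O}(K\backslash G) \otimes_\A {}_\A\mathbf{O}\bigr) \oplus \bigl({}_\A\mathbf{N} \otimes_\A {}_\A\mathbf{O}\bigr),
\]
and $\mathbf{O}\otimes_\Qq \mathbf{O} = (\mathbf{O}(K\backslash G) \otimes \mathbf{O}) \oplus (\mathbf{N} \otimes \mathbf{O})$, where $\mathbf{N} = \Qq\otimes_\A {}_\A\mathbf{N}$.

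To conclude, for any $x \in {}_\A\mathbf{O}(K\backslash G)$ I write $\Delta(x) = y + z$ according to the first decomposition, with $y \in {}_\A\mathbf{O}(K\backslash G)\otimes_\A {}_\A\mathbf{O}$ and $z \in {}_\A\mathbf{N}\otimes_\A {}_\A\mathbf{O}$. On the other hand, Proposition~\ref{prop:pc}(2) gives $\Delta(x) \in \mathbf{O}(K\backslash G)\otimes \mathbf{O}$. Viewing $z$ inside $\mathbf{N}\otimes \mathbf{O}$ via the $\Qq$-decomposition, the uniqueness of the splitting $\mathbf{O}\otimes\mathbf{O} = (\mathbf{O}(K\backslash G)\otimes \mathbf{O})\oplus(\mathbf{N}\otimes\mathbf{O})$ forces $z = 0$. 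Hence $\Delta(x) = y \in {}_\A\mathbf{O}(K\backslash G)\otimes_\A {}_\A\mathbf{O}$, as required.

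I do not expect a serious obstacle: the only non-routine ingredient is the fact that $\RB(K\backslash G)$ sits inside an $\A$-basis of ${}_\A\mathbf{O}$, which is already recorded in \S\ref{sec:BKG}\,(a) and Theorem~\ref{thm:base}, and which makes ${}_\A\mathbf{O}(K\backslash G)$ an $\A$-pure (in fact direct-summand) submodule. This purity is precisely what allows one to detect integrality of $\Delta(x)$ componentwise by working over $\Qq$.
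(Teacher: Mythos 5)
Your proof is correct and uses exactly the same three ingredients as the paper's own argument: integrality of $\Delta$ on ${}_\A\mathbf{O}$, Proposition~\ref{prop:pc}(2) at the $\Qq$-level, and the fact (from \S\ref{sec:BKG}(a) and Theorem~\ref{thm:base}) that $\RB(K\backslash G)$ is a subset of an $\A$-basis of ${}_\A\mathbf{O}$. The paper merely states these facts and declares the conclusion, while you spell out the resulting direct-summand decomposition and the componentwise argument; this is the same approach, just made explicit.
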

\begin{proof}
It is known that the coproduct restricts to integral forms $\Delta: {}_\A\mathbf{O}\rightarrow{}_\A\mathbf{O}\otimes_\A{}_\A\mathbf{O}$. The the corollory follows from Proposition \ref{prop:pc} and the fact that $_\A\mathbf{O}(K\backslash G)$ is spanned by a subset the basis of $_\A\mathbf{O}$ in \S\ref{sec:BKG} (a).
\end{proof}

\subsection{The right action}\label{sec:right}

For $\lambda \in X^+$, since $\dot{\U}[\not\leq\lambda]$ is a two-sided ideal of $\dot{\U}$, the space $\dot{\U}/\dot{\U}[\not\le\lambda]$ is a $(\U,\U)$-bimodule. One can define the notion of based right $\U$-modules in parallel with \cite{Lu94}*{\S 27}. It follows by \cite{Ka94} that  $(\dot{\U}/\dot{\U}[\not\le\lambda],\dot{\RB}[\le\lambda])$ is also a based right $\U$-module. By the construction of the functor $(\cdot)^\imath_\ast$, the space $ (\dot{\U}/\dot{\U}[\not\le\lambda])^\imath_\ast$ admits a right $\U$-action. Recall that $(\dot{\U}/\dot{\U}[\not\le\lambda])^\imath_\ast$ is a (left) based $\U^\imath$-module with $\imath$canonical basis ${\RB}[\le \lambda]^\imath_\ast$.

\begin{lem}\label{le:rb}
\begin{enumerate}
    \item $(\dot{\U}/\dot{\U}[\not\le\lambda], \dot{\RB}[\le \lambda]^\imath)$ is naturally a based right $\U$-module.
   \item  The right $\U$-modules  $((\dot{\U}/\dot{\U}[\not\leq\lambda])^\imath_\ast, {\RB}[\le \lambda]^\imath_\ast)$ and $( (\dot{\U}/\dot{\U}[\not < \lambda])^\imath_\ast,\dot{\RB}[\not <\lambda]_*^\imath)$ are based. 
    \end{enumerate}
\end{lem}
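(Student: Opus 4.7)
The plan is to deduce both parts from the already-established based right $\U$-module structure on $(\dot{\U}/\dot{\U}[\not\le\lambda], \dot{\RB}[\le\lambda])$ (due to \cite{Ka94}), combined with the triangular transition between $\dot{\RB}[\le\lambda]$ and $\dot{\RB}[\le\lambda]^\imath$ and the commutation between the left $\Ui$-action and the right $\U$-action on $\dot{\U}/\dot{\U}[\not\le\lambda]$.

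For part (1), I would first note that by \S\ref{sec:basedUi}(2), the transition matrix from $\dot{\RB}[\le\lambda]$ to $\dot{\RB}[\le\lambda]^\imath$ is unitriangular with off-diagonal entries in $q^{-1}\BZ[q^{-1}]$. Consequently $\dot{\RB}[\le\lambda]^\imath$ spans the same $\A$-lattice $_\A(\dot{\U}/\dot{\U}[\not\le\lambda])$ and the same $\mathbf{A}$-lattice $L(\dot{\U}/\dot{\U}[\not\le\lambda])$ as $\dot{\RB}[\le\lambda]$, so the lattice conditions required for a based right $\U$-module structure are automatically inherited. The nontrivial point is to exhibit a $\Qq$-antilinear involution fixing $\dot{\RB}[\le\lambda]^\imath$ and compatible with the right $\U$-action, for which I would take the involution $\psi_\imath$ used in defining the $\imath$canonical basis; invariance of $\dot{\RB}[\le\lambda]^\imath$ is built into the construction. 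For the compatibility $\psi_\imath(xv) = \psi_\imath(x) \bar v$ with $v \in \U$, the strategy is to factor $\psi_\imath = \Upsilon \cdot \psi$, where $\Upsilon$ is the quasi-$K$-matrix of \cite{BW18a} viewed as acting by left multiplication from a completion of $\U^-$. Since left multiplication by elements of $\U^-$ commutes with right $\U$-multiplication on $\dot{\U}/\dot{\U}[\not\le\lambda]$, and $\psi$ already satisfies $\psi(xv) = \psi(x)\bar v$ by the multiplicativity of the bar involution on $\dot{\U}$, the required identity follows.

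For part (2), the first observation is that $\agUi \cdot (\dot{\U}/\dot{\U}[\not\le\lambda])$ is stable under the right $\U$-action because the left $\Ui$-action and the right $\U$-action commute, so the coinvariant quotient $(\dot{\U}/\dot{\U}[\not\le\lambda])^\imath_\ast$ inherits a natural right $\U$-module structure. The basis $\dot{\RB}[\le\lambda]^\imath_\ast$ together with the $\A$- and $\mathbf{A}$-lattice data are provided by Theorem~\ref{thm:Fbased}. The bar involution $\psi_\imath$ descends to the quotient, and since the projection $\dot{\U}/\dot{\U}[\not\le\lambda] \twoheadrightarrow (\dot{\U}/\dot{\U}[\not\le\lambda])^\imath_\ast$ is right $\U$-linear, compatibility with the right $\U$-action descends directly from part (1). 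The analogous assertion for $((\dot{\U}/\dot{\U}[\not<\lambda])^\imath_\ast, \dot{\RB}[\not<\lambda]_*^\imath)$ is obtained by running the same argument starting from the based $\U$-module $(\dot{\U}/\dot{\U}[\not<\lambda], \dot{\RB}[<\lambda])$.

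The main obstacle is the bar-compatibility step in part (1): one must justify that left multiplication by the formal sum $\Upsilon$ is well-defined on $\dot{\U}/\dot{\U}[\not\le\lambda]$ and commutes with the right $\U$-action. My plan for this point is to note that on each weight space of $\dot{\U}/\dot{\U}[\not\le\lambda]$ only finitely many homogeneous components of $\Upsilon$ act nontrivially, so the action of $\Upsilon$ reduces to left multiplication by a finite sum of elements of $\U^-$, for which commutation with right multiplication is immediate from associativity in $\dot{\U}$. With this in hand, the rest of the argument is formal bookkeeping with the definitions of based modules and of the functor $(\cdot)^\imath_\ast$.
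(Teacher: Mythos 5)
Your proposal is correct and follows essentially the same route as the paper: verify the axioms of \cite{Lu94}*{\S27.1.2} for the $\imath$canonical basis, using the factorization $\psi_\imath = \Upsilon\psi$ and the commutation of left multiplication by $\Upsilon$ with the right $\U$-action to get bar-compatibility, then invoke Theorem~\ref{thm:Fbased} to pass to $(\cdot)^\imath_\ast$ in part~(2). One small slip: the quasi-$K$-matrix $\Upsilon$ lies in a completion of $\U^+$ rather than $\U^-$ in the paper's conventions, but this is immaterial since only the commutation of left with right multiplication is used.
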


\begin{proof}

We show part (1).
    We verify (the right module variant of the) conditions in \cite{Lu94}*{\S27.1.2}. Conditions (a), (b) are trivial. Since the canonical basis and $\imath$canonical basis coincide at $q=\infty$, condition (d) follows. We define the bar involution on $\dot{\U}/\dot{\U}[\not\le\lambda]$ by $\psi_\imath=\Upsilon\psi$ following \cite{BW18a}*{Proposition~5.1}, where $\psi$ is the bar involution fixing canonical basis, and $\Upsilon$ is an element in a certain completion of $\U$. The $\imath$canonical basis $\dot{\RB}[\leq\lambda]^\imath$ is fixed by $\psi_\imath$ by definition. For $u\in \dot{\U}/\dot{\U}[\not\le \lambda]$ and $x\in \U$, we have
    \[
    \psi_\imath(ux)=\Upsilon\psi(ux)=\Upsilon\psi(u)\psi(x)=\psi_\imath(u)\psi(x).
    \]
    Hence the right $\U$-action on $\dot{\U}/\dot{\U}[\not\le\lambda]$ is bar-equivariant with respect to $\psi_\imath$.  This verifies condition (c) in \cite{Lu94}*{\S27.1.2}.

    Now it follows from Theorem \ref{thm:Fbased} that $(\dot{\U}/\dot{\U}[\not\le\lambda])_+$ is a based right $\U$-submodule, and $(\dot{\U}/\dot{\U}[\not\le\lambda])^\imath_\ast$ is a based quotient of right $\U$-submodule.
\end{proof}

The following short exact sequence of based $\U^\imath$-modules in Corollary~\ref{cor:exact}
\begin{equation*}
0\longrightarrow  (\dot{\U}[\not<\lambda]/\dot{\U}[\not\le\lambda])^\imath_\ast \longrightarrow  (\dot{\U}/\dot{\U}[\not\le\lambda])^\imath_\ast \longrightarrow  (\dot{\U}/\dot{\U}[\not <\lambda])^\imath_\ast \longrightarrow 0
\end{equation*}
is also an exact sequence as based right $\U$-modules, thanks to Lemma \ref{le:rb}.

Recall the anti-algebra automorphism $\sigma$ and the algebra homomorphism $\omega$ in \S \ref{sec:def}. For $\lambda\in X^+$, let $^{\sigma \omega} V(\lambda)$ be the right $\U$-module with the same underlying vector space as $V(\lambda)$, and with the $\U$-action twisted by ${\sigma \circ \omega}$. It was proved in \cite{Ka94} the canonical basis $\dot{\RB}$ is $\sigma$-stable; it was proved in \cite{Lu22} that the canonical basis $\dot{\RB}$ is $\omega$-stable.
Then we can conclude that $({}^{\sigma \omega} V(\lambda), B(\lambda))$ is a based right $\U$-module.
 
\begin{prop}\label{prop:quo}
    For each $\lambda\in X^+$, we have an isomorphism of based right $\U$-modules,
    \[
    \left( (\dot{\U}[\not<\lambda]/\dot{\U}[\not\le\lambda])^\imath_\ast,\dot{\RB}[\lambda]_*^\imath\right)\cong \begin{cases}
        \left({}^{\sigma \omega} V(\lambda),{} B(\lambda)\right), &\text{if }\lambda \in X^+_\imath;\\
        0, & \text{if }\lambda \notin X^+_\imath.
    \end{cases}
    \]
\end{prop}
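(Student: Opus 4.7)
The plan is to apply the exact coinvariant functor $(\cdot)^\imath_\ast$ from Proposition~\ref{prop:fc} to the based decomposition of $\dot{\U}[\not<\lambda]/\dot{\U}[\not\le\lambda]$ recalled in \S\ref{sec:based}(a), and then promote the resulting left $\U^\imath$-module isomorphism to an isomorphism of based right $\U$-modules.

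First, \S\ref{sec:based}(a) and \S\ref{sec:basedUi}(a) give an isomorphism of based $\U^\imath$-modules
\[
\dot{\U}[\not<\lambda]/\dot{\U}[\not\le\lambda]\;\xrightarrow{\sim}\;\bigoplus_{b\in\dot{\RB}[\lambda]^{hi}}V(\lambda),
\]
and applying the exact functor of Proposition~\ref{prop:fc} together with Theorem~\ref{thm:Fbased} yields an isomorphism of based $\U^\imath$-modules
\[
\bigl(\dot{\U}[\not<\lambda]/\dot{\U}[\not\le\lambda]\bigr)^\imath_\ast\;\xrightarrow{\sim}\;\bigoplus_{b\in\dot{\RB}[\lambda]^{hi}}V(\lambda)^\imath_\ast.
\]
By \S\ref{sec:spherical}(b), the summand $V(\lambda)^\imath_\ast$ vanishes when $\lambda\notin X^+_\imath$, which settles the second case of the proposition.

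Assume now $\lambda\in X^+_\imath$. Then each summand $V(\lambda)^\imath_\ast\cong V(0)\cong\Qq$, the residual left $\U^\imath$-action is trivial via $\varepsilon$, and the coinvariant space has dimension $|\dot{\RB}[\lambda]^{hi}|=\dim V(\lambda)$. By Lemma~\ref{le:rb}, it carries a based right $\U$-module structure with basis $\dot{\RB}[\lambda]^\imath_\ast$, and the quotient map from $\dot{\U}[\not<\lambda]/\dot{\U}[\not\le\lambda]$ is a based right $\U$-module homomorphism.

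To match this based right $\U$-module with $({}^{\sigma\omega}V(\lambda),B(\lambda))$, I would exploit the stability of $\dot{\RB}[\lambda]$ under both $\sigma$ and $\omega$ recalled before the proposition. This stability, together with Lusztig's cellular description of $\dot{\U}[\not<\lambda]/\dot{\U}[\not\le\lambda]$ as a $\U$-bimodule, implies that the multiplicity set $\dot{\RB}[\lambda]^{hi}$ inherits a based right $\U$-module structure isomorphic to $({}^{\sigma\omega}V(\lambda),B(\lambda))$; here $\sigma$ converts the left $\U$-module structure on $V(\lambda)$ to a right one on the same basis, and $\omega$ corrects the sign of the weights. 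The compatibility of $(\cdot)^\imath_\ast$ with the right $\U$-action then transports this identification to the coinvariant space. I expect the main obstacle to be this last identification: tracing through how $\sigma\omega$ acts on highest-weight cells and verifying that the canonical basis $\dot{\RB}[\lambda]^\imath_\ast$ is precisely the image of $B(\lambda)$ under the $\sigma\omega$-twist.
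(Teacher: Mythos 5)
Your plan contains the right ingredients but takes a detour that leaves exactly the gap you yourself flag. You first decompose $\dot{\U}[\not<\lambda]/\dot{\U}[\not\le\lambda]$ as a \emph{left} based $\U^\imath$-module into $\bigoplus_{b\in\dot{\RB}[\lambda]^{hi}}V(\lambda)$ and push this through $(\cdot)^\imath_\ast$. The problem is that the isomorphism $\phi$ from Lemma~\ref{lem:Ka91} (and hence \S\ref{sec:basedUi}(a)) is only a left $\U$- and $\U^\imath$-module isomorphism; it is blind to the right $\U$-action. After applying the coinvariant functor, the left-hand side $\big(\dot{\U}[\not<\lambda]/\dot{\U}[\not\le\lambda]\big)^\imath_\ast$ carries the inherited right $\U$-structure of Lemma~\ref{le:rb}, but the right-hand side $\bigoplus_{b\in\dot{\RB}[\lambda]^{hi}}V(\lambda)^\imath_\ast$ comes with no right $\U$-action from your construction. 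Recovering it amounts to re-deriving how the right action permutes and mixes the index set $\dot{\RB}[\lambda]^{hi}$ of the multiplicity space, and you explicitly defer this (``I expect the main obstacle to be this last identification''). That deferred step is the entire content of the proposition when $\lambda\in X^+_\imath$, so the argument is incomplete as written.

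The paper closes this gap in one move, by not going through the left decomposition at all: by \cite{Lu94}*{Propositions 29.2.2 and 29.3.1}, one has a $(\U,\U)$-\emph{bimodule} isomorphism $\dot{\U}[\not<\lambda]/\dot{\U}[\not\le\lambda]\cong V(\lambda)\boxtimes{}^{\sigma\omega}V(\lambda)$ together with an induced bijection $\dot{\RB}[\lambda]\cong B(\lambda)\times B(\lambda)$ on bases (here the $\sigma$- and $\omega$-stability of $\dot{\RB}$ that you mention are used, but \emph{upstream}, in establishing that bijection, rather than in a posteriori pattern-matching). Since $\U^\imath$ acts only on the left factor, the coinvariant functor gives $V(\lambda)^\imath_\ast\boxtimes{}^{\sigma\omega}V(\lambda)$, and \S\ref{sec:spherical} reduces the left factor to $\Qq$ or $0$ depending on whether $\lambda\in X^+_\imath$, with the basis compatibility automatic from the $B(\lambda)\times B(\lambda)$ bijection. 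To complete your proof you should swap the left-module decomposition in your first step for this bimodule isomorphism; once you do, the ``main obstacle'' you identified disappears, because there is nothing to reconstruct.
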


\begin{proof}
    By \cite{Lu94}*{Proposition 29.2.2 \& Proposition~29.3.1}, we have the $(\U,\U)$-bimodule isomorphism
\[
\dot{\U}[\not<\lambda]/\dot{\U}[\not\le\lambda]\cong V(\lambda)\boxtimes {}^{\sigma \omega} V(\lambda),
\]
such that we have the induced bijection of bases $\RB[\lambda] \cong  B(\lambda) \times  B(\lambda)$. Here $\boxtimes$ denotes the external tensor product emphasizing the bimodule structure.

Note that $V(\lambda)^\imath_\ast \cong \Qq$ if $\lambda \in X^+_\imath$; and $V(\lambda)^\imath_\ast $ is the zero module if $\lambda \notin X^+_\imath$ by \S\ref{sec:spherical}. Then we have the following right $\U$-module isomorphisms
\[
\big(\dot{\U}[\not<\lambda]/\dot{\U}[\not\le\lambda]\big)^\imath_\ast \cong V(\lambda)^\imath_\ast \boxtimes {}^{\sigma \omega} V(\lambda) \cong \begin{cases}
    {}^{\sigma \omega} V(\lambda), & \text{if }\lambda \in X^+_\imath;\\
    0, & \text{if }\lambda \notin X^+_\imath.
\end{cases}
\]
The compactibility on bases is clear. We finish the proof.
\end{proof}

\subsection{Symmetric spaces}
 
Let $k$ be an algebraically closed field of  characteristic $\neq 2$.  Let $\CA \rightarrow k$ be the ring homomorphism given by $q \mapsto 1$. Set $$_k\mathbf{O}=k\otimes_\A{_\A\mathbf{O}}\qquad\text{ and }\qquad _k\mathbf{O}(K\backslash G)=k\otimes_\A{_\A\mathbf{O}(K\backslash G)}.$$
The $k$-algebra ${}_k \mathbf{O}(K \backslash G)$ is naturally a subalgebra of ${}_k \mathbf{O}$ by Theorem~\ref{thm:base}.
For $\lambda\in X^+$, set $_k\mathbf{O}(K\backslash G)_{\le\lambda}=k\otimes_\A{}_\A\mathbf{O}(K\backslash G)_{\leq\lambda}$. Still by Theorem \ref{thm:base}, the $k$-algebra $_k\mathbf{O}(K\backslash G)_{\le\lambda}$ is a $k$-subalgebra of $_k\mathbf{O}$, and moreover we have $_k\mathbf{O}(K\backslash G)_{\le\lambda}={}_k\mathbf{O}(K\backslash G)\cap {}_k\mathbf{O}_{\leq\lambda}$. As a result, we have 
\begin{equation}\label{eq:KG}
_k\mathbf{O}(K\backslash G)=\cup_{\lambda\in X^+}{}_k\mathbf{O}(K\backslash G)_{\le\lambda}.
\end{equation}

Let $(G_k,\theta_k)$ be the symmetric pair associated with our $\imath$root datum, as constructed in \S \ref{sec:sym}. Recall that by Theorem \ref{thm:chs}, the $k$-algebra $_k\mathbf{O}$ is canonically isomorphic to the algebra $k[G_k]$ of regular functions on $G_k$. Let $K_k=G_k^{\theta_k}$ be the symmetric subgroup.

\begin{thm}\label{thm:KG}
    As regular functions on $G_k$, we have 
    \begin{equation}\label{eq:ok}
    _k\mathbf{O}(K\backslash G)=\{ f\in {}_k\mathbf{O}\mid f(hg)=f(g),\text{ for any }h\in K_k,\; g\in G_k\}.
    \end{equation}
    Therefore ${}_k \mathbf{O}(K \backslash G)$ is isomorphic to the coordinate ring of the symmetric space $K_k \backslash G_k$. Moreover, the basis $B(K \backslash G)$ of $_\CA\mathbf{O}(K\backslash G)$ specializes to a basis of $_k\mathbf{O}(K\backslash G)$.
\end{thm}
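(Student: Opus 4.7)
The plan is to dispose of the easier assertions first. The second assertion, that ${}_k\mathbf{O}(K\backslash G)$ is the coordinate ring of $K_k\backslash G_k$, is automatic from \eqref{eq:ok} since $k[K_k\backslash G_k]$ is by definition the algebra of left $K_k$-invariant regular functions on $G_k$. The basis-specialization claim is immediate from Theorem~\ref{thm:base} applied with $R=k$: the $\CA$-freeness of ${}_\CA\mathbf{O}(K\backslash G)$ with basis $\RB(K\backslash G)$ persists after tensoring with $k$, so $\RB(K\backslash G)$ remains a $k$-basis of the subspace ${}_k\mathbf{O}(K\backslash G)\subseteq{}_k\mathbf{O}$.

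For \eqref{eq:ok}, my approach is to translate the defining vanishing condition of $\mathbf{O}(K\backslash G)$ at $q=1$ into left $K_k$-invariance via Theorem~\ref{thm:ik}. Given $f\in {}_k\mathbf{O}\cong k[G_k]$, I view $f$ as a linear form on ${}_k\widehat{\U}$ through Lusztig's embedding $k[G_k]\hookrightarrow \text{Hom}_k({}_k\widehat{\U},k)$. The Hopf-pairing identity $(uv)(f)=\sum u(f_{(1)})v(f_{(2)})$ shows that $f$ vanishes on ${}_k\agUi\cdot {}_k\dot{\U}$ if and only if $(uv)(f)=\varepsilon(u)v(f)$ for every $u\in {}_k\dot{\U}^\imath$ and $v\in {}_k\dot{\U}$, which rewrites as $\Delta(f)-1\otimes f\in\mathcal{I}\otimes k[G_k]$ where $\mathcal{I}=\{h\in k[G_k]\mid h({}_k\dot{\U}^\imath)=0\}$. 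Since $\mathcal{I}$ is the ideal defining $K_k\subseteq G_k$ by Theorem~\ref{thm:ik}, this is exactly left $K_k$-invariance of $f$. Identifying \eqref{eq:ok} therefore reduces to showing that ${}_k\mathbf{O}(K\backslash G)$ coincides with the subspace of ${}_k\mathbf{O}$ consisting of forms vanishing on ${}_k\agUi\cdot {}_k\dot{\U}$.

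One inclusion is straightforward: each $\widetilde{a}^\imath\in \RB(K\backslash G)$ satisfies the defining vanishing property at the $\CA$-level by construction in \S\ref{sec:BKG}, and this is preserved under the specialization $\CA\to k$ using compatible integral structures on $\agUi$ and $\dot{\U}$, supported by the coproduct formula $\Delta(\agUi)\subseteq\agUi\otimes\U+\U\otimes\agUi$ of Proposition~\ref{prop:pc}. For the reverse inclusion I would induct along the Peter--Weyl filtration and compare associated gradeds. Dualizing Corollary~\ref{cor:exact} and Proposition~\ref{prop:quo}, the graded piece of ${}_k\mathbf{O}(K\backslash G)$ inside ${}_k\mathbf{O}_{\leq\lambda}/{}_k\mathbf{O}_{<\lambda}\cong {}_kV(\lambda)^*\otimes {}_kV(-w_0\lambda)^*$ is precisely ${}_kV(-w_0\lambda)^*$ for $\lambda\in X^+_\imath$ and is zero otherwise. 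The main obstacle will be showing that the corresponding graded piece of the left-$K_k$-invariant subspace, namely $({}_kV(\lambda)^*)^{K_k}\otimes {}_kV(-w_0\lambda)^*$, has the same dimension in arbitrary characteristic $\neq 2$. This amounts to the $q\mapsto 1$ specialization of Theorem~\ref{thm:Vlabased} at the Weyl-module level, which one accesses through the integral form of the based morphism $V(\lambda)\to V(0)$ and the exactness of the coinvariant functor $(\cdot)^\imath_\ast$ from Proposition~\ref{prop:fc}.
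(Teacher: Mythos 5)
Your overall strategy for \eqref{eq:ok}---using Theorem~\ref{thm:ik} and the comultiplication to rewrite left $K_k$-invariance of $f$ as the condition $f(yx)=\widetilde{1_0}(y)f(x)$ for $y\in{}_k\dot{\U}^\imath$, $x\in{}_k\dot{\U}$, and then matching this against the defining vanishing condition of ${}_k\mathbf{O}(K\backslash G)$---coincides with the paper's proof up to the point where you state the remaining reduction. From there you diverge. You propose a dimension count along the Peter--Weyl filtration, reducing the reverse inclusion to bounding $\dim\bigl({}_kV(\lambda)^*\bigr)^{K_k}$ by $1$ or $0$ according to whether $\lambda\in X^+_\imath$. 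The paper instead works on each filtration piece in one step: it shows ${}^{K_k}k[G_k]_{\le\lambda}$ equals the annihilator of the $k$-span of $\{yx-\widetilde{1_0}(y)x\}$ inside $\text{Hom}_k({}_k(\dot{\U}/\dot{\U}[\not\le\lambda]),k)$, and then observes that the $\A$-span of these elements is precisely ${}_\A(\dot{\U}/\dot{\U}[\not\le\lambda])_+$, a free $\A$-module direct summand by Theorem~\ref{thm:Fbased}, so the annihilator description base-changes to $k$ without any graded or dimension argument. This sidesteps your ``main obstacle'' entirely; the two routes buy you different things (yours would implicitly prove a spherical multiplicity-one statement for Weyl modules over $k$, the paper's gives the equality with no representation-theoretic input beyond Theorem~\ref{thm:Fbased}).

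Two points of caution about your route as written. First, the claim that $k[K_k\backslash G_k]$ is ``by definition'' the algebra of left $K_k$-invariants in $k[G_k]$ needs justification: the paper invokes Springer's theorem that $K_k\backslash G_k$ is affine before identifying the coordinate ring with the invariants. Second, your ``main obstacle'' needs more than the $q\mapsto 1$ specialization of Theorem~\ref{thm:Vlabased}: that theorem gives only the existence of a nonzero $\Ui$-morphism $V(\lambda)\to V(0)$ over $\Qq$; the multiplicity bound $\le 1$ is \S\ref{sec:spherical}(b), which is a $\Qq$-statement, and you must additionally translate $\Ui$-invariance into $K_k$-invariance over $k$ (this is nontrivial and is essentially what the $\A$-span identity in the paper packages). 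Likewise, Proposition~\ref{prop:fc} gives exactness of $(\cdot)^\imath_\ast$ over $\Qq$; passing to $k$ again requires the $\A$-freeness of Theorem~\ref{thm:Fbased}. These gaps are fillable, but your sketch glosses over them, while the paper's argument avoids them at the source.
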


\begin{proof}

For $\lambda\in X^+$, let us write
\[
^{K_k} k[ G_k]_{\leq\lambda}=\{ f\in k[G_k]_{\leq\lambda}\mid f(hg)=f(g),\text{ for any }h\in K_k,\; g\in G_k\}.
\]
It will suffice to prove that $$^{K_k}k[ G_k]_{\leq\lambda}={}_k\mathbf{O}(K\backslash G)_{\leq\lambda},\qquad \text{for any }\lambda\in X^+.$$

Let $\mathcal{I}\subset k[G_k]$ be the defining ideal of the closed subgroup $K_k$, and $\delta:k[G_k]\rightarrow k[G_k]\otimes k[G_k]$ be the coproduct of $k[G_k]$. Then we have
\[
^{K_k}k[ G_k]_{\leq\lambda}=\{f\in k[G_k]_{\leq\lambda}\mid \delta(f)-1\otimes f\in \mathcal{I}\otimes k[G_k]\}.
\]
By \cite{Lu07}*{\S 3.1}, under the isomorphism $k[G_k]\cong {}_k\mathbf{O}$, the constant function 1 is the dual canonical basis element $\widetilde{1_0}$. 
Recall from Theorem \ref{thm:ik} that 
\[
\mathcal{I}=\{ f\in k[G_k]\mid f({}_k\dot{\U}^\imath)=0\}.
\]

Therefore we have
\begin{align*}
^{K_k}k[ G_k]_{\leq\lambda}&=\{f\in k[G_k]_{\leq\lambda}\mid f(yx)=\widetilde{1_0}(y)f(x),\;\text{for }y\in {}_k\dot{\U}^\imath,\;x\in {}_k\dot{\U}\}\\
&=\{ f\in \text{Hom}_k\big( {}_k(\dot{\U}/\dot{\U}[\not\le\lambda]),k\big)\mid f\big({}_k(\dot{\U}/\dot{\U}[\not\le\lambda])_+\big)=0\}.
\end{align*}
Here the last equality follows from the fact that, the integral form of the based $\U^\imath$-module $_\A(\dot{\U}/\dot{\U}[\not\le\lambda])_+$ is spanned by $yx-\widetilde{1_0}(y)x$, for $y\in{}_\A\dot{\U}^\imath$, $x\in {}_\A(\dot{\U}/\dot{\U}[\not\le\lambda])$, as an $\A$-module.

On the other hand, recall that 
\[
_\A\mathbf{O}(K\backslash G)_{\leq\lambda}=\{ f\in \text{Hom}_\A\big( {}_\A(\dot{\U}/\dot{\U}[\not\le\lambda]),\A\big)\mid f\big({}_\A(\dot{\U}/\dot{\U}[\not\le\lambda])_+\big)=0\}.
\]
By Theorem \ref{thm:Fbased}, we have
\[
_k\mathbf{O}(K\backslash G)_{\leq\lambda}=\{ f\in \text{Hom}_k\big( {}_k(\dot{\U}/\dot{\U}[\not\le\lambda]),k\big)\mid f\big({}_k(\dot{\U}/\dot{\U}[\not\le\lambda])_+\big)=0\}.
\]
This completes the proof of the equality \eqref{eq:ok}.

For the last assertion, it is known that the symmetric space $K_k\backslash G_k$ is affine by \cite{Sp83}*{Proposition 2.2}. Therefore the coordinate ring $k[K_k\backslash G_k]$ is isomorphic to the subalgebra of $K_k$-invariant functions in $k[G_k]$ (cf. \cite{Spr09}*{Exercise 5.5.9 (8)}). We complete the proof of the theorem.
\end{proof}

Thanks to Theorem \ref{thm:KG}, the base change of the coproduct in Corollary \ref{cor:coi} gives the map 
\[
k[K_k\backslash G_k]\longrightarrow k[K_k\backslash G_k]\otimes_k k[G_k],
\]
which is the comorphism of the action map
$K_k\backslash G_k\times_k G_k\longrightarrow K_k\backslash G_k$.

We hence have the following geometric reformulation of Theorem~\ref{thm:KG}. 

{\it (a) The space $Spec\, {}_\BZ\mathbf{O}(K \backslash G)$ admits a natural right action of the Chevalley group scheme $G_{\BZ}$, whose geometric fiber over any algebraically closed field $k$ of characteristic not $2$ is isomorphic to the symmetric space $K_k \backslash G_k$ with the natural right action of $G_k$.
}

\subsection{Filtrations}

The coordinate ring $_k\mathbf{O}(K\backslash G)\cong k[K_k\backslash G_k]$ admits a left $G_k$-action by 
\[
    (g \cdot f)(x) = f(xg), \quad \text{ for } x \in K_k \backslash G_k, f \in k[K_k\backslash G_k], g \in G_k.
\] 
Recall the Peter--Weyl filtration $\{{}_k\mathbf{O}_{\leq\lambda} \mid\lambda\in X^+\}$ of the coordinate ring $_k\mathbf{O}\cong k[G_k]$ in Theorem~\ref{thm:PW}. For $\lambda \in X^+$, recall 
\[
{}_k\mathbf{O}(K\backslash G)_{\leq \lambda}={}_k\mathbf{O}(K\backslash G)\cap {}_k\mathbf{O}_{\leq \lambda}.
\]
Since each subspace ${}_k\mathbf{O}_{\leq \lambda}$ is a ($G_k, G_k$)-subbimodule of $_k\mathbf{O}$, we deduce that ${}_k\mathbf{O}(K\backslash G)_{\leq \lambda}$ is a $G_k$-submodule. Similarly we have the $G_k$-submodule ${}_k\mathbf{O}(K\backslash G)_{<\lambda}$.

\begin{thm}\label{thm:fil}
    For $\lambda\in X^+$. We have the $G_k$-module isomorphisms 
    \[
    _k\mathbf{O}(K\backslash G)_{\leq \lambda}/{}_k\mathbf{O}(K\backslash G)_{<\lambda}
    \cong
    \begin{cases}
    _kV(-w_0\lambda)^*, &\text{if }\lambda \in X^+_\imath;\\
    0, &\text{if }\lambda \notin X^+_\imath.
    \end{cases}
    \]
\end{thm}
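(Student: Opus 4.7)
The plan is to dualize the short exact sequence of based right $\U$-modules provided by Corollary~\ref{cor:exact}(1) together with Lemma~\ref{le:rb}, invoke Proposition~\ref{prop:quo} to identify the quotient term, and then specialize at $q=1$ to match against the classical Peter--Weyl filtration of Theorem~\ref{thm:PW}.

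First I would work at the integral level. Since each term in Corollary~\ref{cor:exact}(1) is a free $\A$-module on (a subset of) the relevant $\imath$canonical basis, applying $\text{Hom}_\A(-,\A)$ to the $\A$-forms yields an exact sequence
\[
0 \longrightarrow {}_\A\mathbf{O}(K \backslash G)_{<\lambda} \longrightarrow {}_\A\mathbf{O}(K \backslash G)_{\le\lambda} \longrightarrow \text{Hom}_\A\bigl({}_\A(\dot{\U}[\not<\lambda]/\dot{\U}[\not\le\lambda])^\imath_\ast,\,\A\bigr) \longrightarrow 0,
\]
compatibly with the left $\U$-action dualizing the right $\U$-action on the $\imath$coinvariant space. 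By Proposition~\ref{prop:quo} the third term is zero when $\lambda \notin X^+_\imath$, and is $\text{Hom}_\A\bigl({}_\A({}^{\sigma\omega}V(\lambda)),\,\A\bigr)$ when $\lambda \in X^+_\imath$. All three $\A$-forms are free (by Lemma~\ref{lem:free1} and freeness of the $\A$-form of $V(\lambda)$), so the sequence remains short exact after base change $k \otimes_\A -$, reducing the problem to identifying the $k$-specialization of this final dual as a $G_k$-module.

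Finally, I would match the specialization with $_kV(-w_0\lambda)^*$ by comparing against Theorem~\ref{thm:PW}. The classical Peter--Weyl quotient $_k\mathbf{O}_{\le\lambda}/{}_k\mathbf{O}_{<\lambda} \cong {}_kV(\lambda)^* \otimes_k {}_kV(-w_0\lambda)^*$ is obtained by dualizing and specializing $V(\lambda) \boxtimes {}^{\sigma\omega}V(\lambda)$; the right-hand tensor factor $_kV(-w_0\lambda)^*$ is precisely the $k$-dual of the right $\U$-module ${}^{\sigma\omega}V(\lambda)$, and its $G_k$-action matches the left $G_k$-action $(g \cdot f)(x) = f(xg)$ on $_k\mathbf{O}(K \backslash G)$ via the $G_k$-equivariant inclusion $_k\mathbf{O}(K \backslash G)_{\le\lambda} \hookrightarrow {}_k\mathbf{O}_{\le\lambda}$ of Theorem~\ref{thm:base}. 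The main obstacle is the bookkeeping associated with the right-module conventions, namely verifying that the twist by $\sigma\omega$ at the quantum level survives specialization and produces the dual Weyl module with highest weight $-w_0\lambda$ rather than $\lambda$; but this verification is already embedded in the proof of the classical Peter--Weyl decomposition and can be cited directly.
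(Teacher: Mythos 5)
Your proposal follows essentially the same route as the paper: dualize the short exact sequence from Corollary~\ref{cor:exact}(1), use Lemma~\ref{le:rb} for the right-module structure and Lemma~\ref{lem:free1} for freeness so that the sequence survives base change, invoke Proposition~\ref{prop:quo} to identify the quotient with ${}^{\sigma\omega}V(\lambda)$ (or zero), and then match the $k$-dual with ${}_kV(-w_0\lambda)^*$. The one place you defer to the classical Peter--Weyl argument --- namely why the $\sigma\omega$-twist on the right module dualizes to a left module with highest weight $-w_0\lambda$ --- is exactly where the paper supplies the short explicit fact that $\sigma\circ\omega(g)=\mathrm{Ad}_h\circ\omega(g^{-1})$ for some $h$ in a maximal torus, so citing that identity directly would make the last step self-contained rather than an appeal to embedded bookkeeping.
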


\begin{proof}
    By the construction of canonical basis of $_\A\mathbf{O}(K\backslash G)$, we have the following canonical isomorphism as vector spaces,
    \begin{align*}
    _k\mathbf{O}(K\backslash G)_{\leq \lambda}/{}_k\mathbf{O}(K\backslash G)_{< \lambda}
    &\cong k\otimes_\A\big({}_\A\mathbf{O}(K\backslash G)_{\leq\lambda}/{}_\A\mathbf{O}(K\backslash G)_{<\lambda}\big)\\
    &\cong k\otimes _\A \left(\text{Hom}_\A\big({}_\A (\dot{\U}[\not <\lambda]/\dot{\U}[\not\leq\lambda])^\imath_\ast,\A\big)\right)\\
    &\cong \text{Hom}
_k\big({}_k  (\dot{\U}[\not <\lambda]/\dot{\U}[\not\le\lambda])^\imath_\ast,k\big).
\end{align*}
The second identity follows from the exact sequence in Corollary~\ref{cor:exact}. The left $G_k$-action is given by 
\[
 (g\cdot f) (x) = f(xg), \quad g \in G_K, x \in \big({}_k  (\dot{\U}[\not <\lambda]/\dot{\U}[\not\le\lambda])^\imath_\ast, f \in \text{Hom}
_k\big({}_k  (\dot{\U}[\not <\lambda]/\dot{\U}[\not\le\lambda])^\imath_\ast,k\big).
\]

By Proposition \ref{prop:quo}, we have the right $G_k$-module isomorphism 
\[
_k  (\dot{\U}[\not <\lambda]/\dot{\U}[\not\le\lambda])^\imath_\ast \cong 
\begin{cases}
{}^{\sigma \omega}_k V(\lambda), &\text{if }\lambda \in X^+_\imath;\\
0, &\text{if }\lambda \notin X^+_\imath.
\end{cases}
\]

 Note that $\sigma\circ \omega (g)=\text{Ad}_h\circ \omega (g^{-1})$ for any $g\in G_k$ and some $h$ in a maximal torus of $G_k$. 
Therefore as left $G_k$-modules, we have
\[
_k\mathbf{O}(K\backslash G)_{\leq \lambda}/{}_k\mathbf{O}(K\backslash G)_{< \lambda} \cong 
\begin{cases}
{}_k V(-w_0\lambda)^*, &\text{if }\lambda \in X^+_\imath;\\
0, &\text{if }\lambda \notin X^+_\imath.
\end{cases} \qedhere
\]
\end{proof}

For a rational $G_k$-module $V$, an ascending chain $F^0=(0)\subset F^1\subset F^2\subset\cdots $ of $G_k$-submodules
is called a \emph{good filtration} if $\cup_j F^j=V$, and for each $j\geq 1$, the quotient $F^j/F^{j-1}$ is isomorphic to $_kV(\lambda_j)^*$  for some $\lambda_j\in X^+$. 

By Theorem \ref{thm:PW}, $k[G_k]$ has a good filtration as a $G_k\times G_k$-module. Theorem~\ref{thm:fil} generalises this result to arbitrary symmetric pairs, which we restate as follows.

\begin{cor}
    The coordinate ring of the symmetric space $K_k\backslash G_k$ has a good filtration as a $G_k$-module.
\end{cor}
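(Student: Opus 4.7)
The plan is to deduce this directly from Theorem~\ref{thm:fil} together with the exhaustion ${}_k\mathbf{O}(K\backslash G)=\bigcup_{\lambda\in X^+}{}_k\mathbf{O}(K\backslash G)_{\leq\lambda}$ of \eqref{eq:KG}. The only real work is to convert the $X^+$-indexed, partially ordered family $\{{}_k\mathbf{O}(K\backslash G)_{\leq\lambda}\}$ into an ascending chain (indexed by nonnegative integers) whose successive quotients are of the right form.

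First I would enumerate the countable set $X^+_\imath$ as $\lambda_1,\lambda_2,\ldots$ in a way that refines the dominance order $\leq$ on $X$; since each set $\{\mu\in X^+\mid \mu\leq\lambda\}$ is finite for any fixed $\lambda$, such a linear refinement exists. Then I would set
\[
F^0=(0),\qquad F^j={}_k\mathbf{O}(K\backslash G)_{\leq\lambda_j}\quad (j\geq 1).
\]
By construction $F^{j-1}\subset F^j$, and each $F^j$ is a $G_k$-submodule, since Theorem~\ref{thm:fil} already establishes that every ${}_k\mathbf{O}(K\backslash G)_{\leq\mu}$ is a $G_k$-submodule of ${}_k\mathbf{O}(K\backslash G)$.

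Next I would identify the subquotients. Because the enumeration refines the dominance order, the submodule ${}_k\mathbf{O}(K\backslash G)_{<\lambda_j}=\sum_{\mu<\lambda_j,\,\mu\in X^+}{}_k\mathbf{O}(K\backslash G)_{\leq\mu}$ is contained in $F^{j-1}$, while conversely $F^{j-1}\subseteq {}_k\mathbf{O}(K\backslash G)_{<\lambda_j}$ because $\lambda_1,\ldots,\lambda_{j-1}$ all satisfy $\lambda_i<\lambda_j$ or are incomparable, and in the incomparable case $\lambda_i\not\leq\lambda_j$ forces ${}_k\mathbf{O}(K\backslash G)_{\leq\lambda_i}\cap{}_k\mathbf{O}(K\backslash G)_{\leq\lambda_j}\subseteq {}_k\mathbf{O}(K\backslash G)_{<\lambda_j}$ by comparing canonical bases. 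Hence $F^{j-1}\cap F^j={}_k\mathbf{O}(K\backslash G)_{<\lambda_j}$, and Theorem~\ref{thm:fil} gives
\[
F^j/F^{j-1}\cong{}_k\mathbf{O}(K\backslash G)_{\leq\lambda_j}/{}_k\mathbf{O}(K\backslash G)_{<\lambda_j}\cong{}_kV(-w_0\lambda_j)^*,
\]
which has the required form since $-w_0\lambda_j\in X^+$.

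Finally I would verify $\bigcup_j F^j={}_k\mathbf{O}(K\backslash G)$. By \eqref{eq:KG} it suffices to show every ${}_k\mathbf{O}(K\backslash G)_{\leq\mu}$ with $\mu\in X^+$ lies in some $F^j$. Using the canonical basis from Theorem~\ref{thm:base}, ${}_k\mathbf{O}(K\backslash G)_{\leq\mu}$ has a basis indexed by a finite subset of $\bigsqcup_{\nu\in X^+_\imath,\,\nu\leq\mu}\RB(K\backslash G)_\nu$, so it is contained in ${}_k\mathbf{O}(K\backslash G)_{\leq\lambda_j}=F^j$ for $j$ large enough. The main thing to be careful about is precisely this bookkeeping: contributions from $\mu\in X^+\setminus X^+_\imath$ produce zero subquotients by Theorem~\ref{thm:fil}, and the canonical basis shows they are absorbed into the $F^j$ coming from strictly smaller spherical weights, so they can be safely omitted from the chain without losing exhaustion.
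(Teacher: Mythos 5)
Your overall strategy — take a linear refinement of the dominance order on $X^+_\imath$, build an ascending chain, and appeal to Theorem~\ref{thm:fil} plus \eqref{eq:KG} to identify the subquotients and exhaustion — is exactly the (implicit) argument the paper intends. However, the specific definition $F^j = {}_k\mathbf{O}(K\backslash G)_{\le\lambda_j}$ does not yield a chain, and this breaks the proof.

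The claim ``by construction $F^{j-1}\subset F^j$'' is false. A linear refinement of the dominance order only guarantees that $\lambda_i<\lambda_j$ implies $i<j$; it does \emph{not} give $\lambda_{j-1}\le\lambda_j$. When $\lambda_{j-1}$ and $\lambda_j$ are incomparable (which happens already in rank~$2$), the spaces ${}_k\mathbf{O}(K\backslash G)_{\le\lambda_{j-1}}$ and ${}_k\mathbf{O}(K\backslash G)_{\le\lambda_j}$ are not nested: the canonical basis element $\widetilde{a}^\imath$ with $a^\imath\in\dot{\RB}[\lambda_{j-1}]^\imath_\ast$ lies in $\RB(K\backslash G)_{\le\lambda_{j-1}}$ but not in $\RB(K\backslash G)_{\le\lambda_j}$. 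The later claim ``$F^{j-1}\subseteq{}_k\mathbf{O}(K\backslash G)_{<\lambda_j}$'' fails for the same reason, and since $F^{j-1}\not\subset F^j$ the quotient $F^j/F^{j-1}$ is not even defined. The argument you offer in the incomparable case controls only the \emph{intersection} ${}_k\mathbf{O}(K\backslash G)_{\le\lambda_i}\cap{}_k\mathbf{O}(K\backslash G)_{\le\lambda_j}$, which is not enough to establish containment.

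The fix is standard: define the chain by cumulative sums, $F^j=\sum_{i\le j}{}_k\mathbf{O}(K\backslash G)_{\le\lambda_i}$. Then $F^{j-1}\subset F^j$ is automatic, each $F^j$ is a $G_k$-submodule, and comparing canonical bases (together with the refinement property, so every $\nu\in X^+_\imath$ with $\nu<\lambda_j$ appears among $\lambda_1,\dots,\lambda_{j-1}$) gives $F^{j-1}\cap{}_k\mathbf{O}(K\backslash G)_{\le\lambda_j}={}_k\mathbf{O}(K\backslash G)_{<\lambda_j}$. Hence by the second isomorphism theorem
\[
F^j/F^{j-1}\cong{}_k\mathbf{O}(K\backslash G)_{\le\lambda_j}/{}_k\mathbf{O}(K\backslash G)_{<\lambda_j}\cong{}_kV(-w_0\lambda_j)^*,
\]
using Theorem~\ref{thm:fil}. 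Your exhaustion argument via the canonical basis of Theorem~\ref{thm:base} then goes through essentially unchanged, and the resulting chain is a good filtration. With this correction the proposal matches the route the paper takes.
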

 
\begin{remark}
Let $B_k \subset G_k$ be a Borel subgroup with the unipotent radical $N_k \subset B_k$. Then the similar arguments in the proof of Theorem~\ref{thm:KG} and Theorem~\ref{thm:fil} can be applied to give a construction of an integral form and a good filtration of the coordinate ring $k[N_k \backslash G_k]$.

The construction of the (integral form of) ring $k[N_k \backslash G_k]$ will actually be easier, since one only need to consider Lusztig's (dual) canonical bases.  The good filtration of the $G_k$-module $k[N_k \backslash G_k]$ actually splits, thanks to the left multiplication by the maximal torus.  
\end{remark}

\subsection{Spherical functions}
Let $\mathbf{O}(K\backslash G /K)$ be the $\Qq$-subspace of $\mathbf{O}$ consisting of linear forms on $\Udot$ that vanish on the subspace $\agUi \Udot + \Udot \agUi$. The following proposition can be proved similarly to  Proposition~\ref{prop:pc}, whose proof we omit.

\begin{prop}
    The subspace $\mathbf{O}(K\backslash G /K)$ is closed under multiplication.
\end{prop}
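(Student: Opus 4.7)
The plan is to mirror the proof of Proposition~\ref{prop:pc} exactly, using the fact that multiplication on $\mathbf{O}$ is dual to the coproduct $\Delta$ of $\U$, so that $(f_1 f_2)(x) = (f_1 \otimes f_2)(\Delta(x))$ for $f_1, f_2 \in \mathbf{O}$ and $x \in \Udot$. The single ingredient needed is the inclusion
\[
\Delta(\agUi) \subset \agUi \otimes \U + \U \otimes \agUi,
\]
which was already established in the proof of Proposition~\ref{prop:pc} by checking it on the algebra generators $B_i$ ($i\in I$), $E_i$ ($i \in I_\bullet$), and $K_\mu - 1$ ($\mu \in Y^\imath$) of $\agUi$ as a two-sided ideal of $\Ui$.

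Given this, suppose $f_1, f_2 \in \mathbf{O}(K\backslash G/K)$, i.e., both vanish on $\agUi \Udot + \Udot \agUi$. First I would verify left $\agUi$-invariance of the product: for $y \in \agUi$ and $x \in \Udot$,
\[
(f_1 f_2)(yx) = (f_1 \otimes f_2)\bigl(\Delta(y)\Delta(x)\bigr).
\]
By the displayed inclusion, $\Delta(y)\Delta(x) \in \agUi\Udot \otimes \Udot + \Udot \otimes \agUi\Udot$, and each summand is annihilated by $f_1 \otimes f_2$ since $f_1$ kills $\agUi\Udot$ in the first factor and $f_2$ kills $\agUi\Udot$ in the second. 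Next I would verify right $\agUi$-invariance by the symmetric computation $(f_1 f_2)(xy) = (f_1 \otimes f_2)(\Delta(x)\Delta(y))$, which this time lands in $\Udot\agUi \otimes \Udot + \Udot \otimes \Udot\agUi$, again killed by $f_1 \otimes f_2$ using now that both $f_i$ vanish on $\Udot\agUi$. Hence $f_1 f_2$ vanishes on $\agUi \Udot + \Udot \agUi$, so it lies in $\mathbf{O}(K\backslash G/K)$.

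There is no real obstacle here: the crux is simply that the single condition $\Delta(\agUi) \subset \agUi \otimes \U + \U \otimes \agUi$ is symmetric enough to propagate both left and right invariance simultaneously, because in each tensor factor the element of $\agUi$ can be absorbed either into the left or the right of the $\Udot$-factor via the algebra structure of $\Udot$ as a $(\Ui,\Ui)$-bimodule. Thus the proof of closure under multiplication for $\mathbf{O}(K\backslash G/K)$ is strictly parallel to that of $\mathbf{O}(K\backslash G)$, only requiring one to invoke vanishing on $\Udot\agUi$ in addition to vanishing on $\agUi\Udot$.
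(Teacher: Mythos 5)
Your proof is correct and takes exactly the approach the paper indicates: it mirrors the proof of Proposition~\ref{prop:pc}, using the inclusion $\Delta(\agUi) \subset \agUi \otimes \U + \U \otimes \agUi$ to propagate both left and right $\agUi$-invariance through the product, and the paper explicitly omits this proof as being ``similar.''
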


We define ${}_\mathcal{A}\mathbf{O}(K\backslash G /K) = \mathbf{O}(K\backslash G /K) \cap {}_\mathcal{A} \mathbf{O}$, and $\mathbf{O}(K\backslash G /K)_{\le \lambda} = \mathbf{O}(K\backslash G /K) \cap \mathbf{O}_{\le \lambda}$ for $\lambda\in X^+$. We similarly define $\mathbf{O}(K\backslash G /K)_{< \lambda}$, ${}_\mathcal{A}\mathbf{O}(K\backslash G /K)_{\le \lambda}$, and ${}_\mathcal{A}\mathbf{O}(K\backslash G /K)_{< \lambda}$.

By Lemma~\ref{le:rb} (1), for $\lambda\in X^+$, $(\Udot/ \Udot[\not \le \lambda ], \RB[\le \lambda]^\imath)$ is a based right $\U$-module, hence a based right $\U^\imath$-module. We denote its $\imath$canonical basis by ${}^\imath \RB[\le \lambda]^\imath = \{{}^\imath b ^\imath \vert b \in \RB[\le \lambda]\}$ as a right $\U^\imath$-module. Following \S\ref{sec:BKG}, we define the $\Qq$-linear form 
\[
{}^\imath\widetilde{a}^\imath : \Udot/ \Udot[\not \le \lambda] \rightarrow \Qq,  \quad {}^\imath\widetilde{a}^\imath({}^\imath b^\imath) = \delta_{{}^\imath a^\imath, {}^\imath b^\imath} \text{ for all } {}^\imath b^\imath \in {}^\imath\dot{\RB}[\le \lambda]^\imath.
\]
Then $\{{}^\imath\widetilde{a}^\imath \vert {}^\imath a^\imath \in {}^\imath\dot{\RB}[\le \lambda]^\imath \}$ forms a $\Qq$-basis of $\mathbf{O}_{\le \lambda}$ and an $\CA$-basis of ${}_\CA \mathbf{O}_{\le \lambda}$.

By Lemma~\ref{le:rb} (2), $\big((\Udot/ \Udot[\not \le \lambda ])^\imath_\ast, \RB[\le \lambda]^\imath_\ast \big)$ is a based right $\U$-module. We denote its $\imath$canonical basis by ${}^\imath \RB[\le \lambda]^\imath_\ast$ as a right $\U^\imath$-module. By the right variant of Theorem~\ref{thm:Fbased}, we see that $(\Udot/ \Udot[\not \le \lambda ])^\imath_\ast \agUi$ is a based right $\Ui$-submodule of $(\Udot/ \Udot[\not \le \lambda ])^\imath_\ast$. By applying the right variant of the construction in \S \ref{sec:ast}, we get a subset $^\imath_\ast \RB[\le \lambda]^\imath_\ast$ of the basis $^\imath \RB[\le\lambda]^\imath$, such that the based quotient $\big({}^\imath_\ast(\Udot/ \Udot[\not \le \lambda ])^\imath_\ast, {}^\imath_\ast \RB[\le \lambda]^\imath_\ast \big)$ is a based right $\U^\imath$-module. Similar to \S\ref{sec:BKG}, we see that $\RB(K \backslash G / K)_{\le \lambda} = \{{}^\imath\tilde{a}^{\imath}\vert {}^\imath\tilde{a}^{\imath} \in {}^\imath_\ast \RB[\le \lambda]^\imath_\ast\}$ is a $\Qq$-basis of $\mathbf{O}(K\backslash G /K)_{\le \lambda} $ and an $\mathcal{A}$-basis of ${}_\mathcal{A}\mathbf{O}(K\backslash G /K)_{\le \lambda}$. For any $\lambda \in X^+$, the natural embedding
\[
 \mathbf{O}(K \backslash G / K)_{< \lambda} \rightarrow \mathbf{O}(K \backslash G / K)_{\le \lambda} \text{ maps } \RB(K \backslash G / K)_{< \lambda} \text{ to } \RB(K \backslash G / K)_{\le \lambda}.
\]
We define $$\RB(K \backslash G / K)_\lambda =\RB(K \backslash G / K)_{\le \lambda} - \RB(K \backslash G / K)_{< \lambda} \quad  \text{ and } \quad \RB(K \backslash G / K) = \bigsqcup_{\lambda \in X^+} \RB(K \backslash G / K)_\lambda.$$ The following theorem is analogous to Theorem~\ref{thm:base} and Theorem~\ref{thm:KG}. The proof will be omitted.
\begin{thm}
    \begin{enumerate}
    \item The set $\RB(K\backslash G /K)$ is a $\Qq$-basis of $\mathbf{O}(K\backslash G /K) $ and an $\mathcal{A}$-basis of ${}_\mathcal{A}\mathbf{O}(K\backslash G /K)$.
    \item For any algebraically closed field $k$ of char $\neq 2$, we have 
    \[
    {}_{k} \mathbf{O}(K\backslash G /K) = \{f \in  {}_{k} \mathbf{O} \vert f(hg)=f(gh)=f(g) , \forall g \in G_k, h\in K_k\}.
    \] 
    Here $ {}_{k} \mathbf{O}(K\backslash G /K) = k \otimes_\CA  {}_\mathcal{A}\mathbf{O}(K\backslash G /K)$.
\end{enumerate}
\end{thm}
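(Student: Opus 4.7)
The plan is to follow the template of Theorems~\ref{thm:base} and \ref{thm:KG}, now applying the coinvariant construction of Theorem~\ref{thm:Fbased} iteratively on both the left and right $\Ui$-actions on $\Udot/\Udot[\not\le\lambda]$. For part (1), the first step is the identification, for each $\lambda\in X^+$,
\[
\mathbf{O}(K\backslash G/K)_{\le\lambda} \;=\; \mathrm{Hom}_{\Qq}\!\left({}^\imath_\ast(\Udot/\Udot[\not\le\lambda])^\imath_\ast,\, \Qq\right),
\]
since a linear form on $\Udot/\Udot[\not\le\lambda]$ annihilates both $\agUi\Udot$ and $\Udot\agUi$ precisely when it factors through the iterated coinvariant. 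Lemma~\ref{le:rb}(2) endows $(\Udot/\Udot[\not\le\lambda])^\imath_\ast$ with a based right $\U$-module structure, so the right-hand analog of Theorem~\ref{thm:Fbased} produces the based right $\Ui$-quotient ${}^\imath_\ast(\Udot/\Udot[\not\le\lambda])^\imath_\ast$ with basis ${}^\imath_\ast\RB[\le\lambda]^\imath_\ast$; dualizing yields the required $\Qq$- and $\mathcal{A}$-bases $\RB(K\backslash G/K)_{\le\lambda}$ of $\mathbf{O}(K\backslash G/K)_{\le\lambda}$ and ${}_\mathcal{A}\mathbf{O}(K\backslash G/K)_{\le\lambda}$.

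Next, for $\lambda \le \lambda'$ in $X^+$, Corollary~\ref{cor:exact}(2) together with the right-hand analog of Proposition~\ref{prop:fc}(2) produces, after passing to the iterated coinvariant, a short exact sequence of based right $\Ui$-modules whose dual gives a basis-compatible embedding $\mathbf{O}(K\backslash G/K)_{\le\lambda} \hookrightarrow \mathbf{O}(K\backslash G/K)_{\le\lambda'}$. Taking unions over $\lambda\in X^+$ completes part (1). For part (2), I would mirror the proof of Theorem~\ref{thm:KG}: writing $\delta$ for the coproduct of $k[G_k]$ and $\mathcal{I}$ for the defining ideal of $K_k$, the two-sided invariance $f(hgh')=f(g)$ for all $h,h'\in K_k$, $g\in G_k$ is equivalent to $\delta(f)-1\otimes f\in \mathcal{I}\otimes k[G_k]$ together with $\delta(f)-f\otimes 1\in k[G_k]\otimes \mathcal{I}$. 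Using Theorem~\ref{thm:ik} and the identification $1\leftrightarrow \widetilde{1_0}$, this translates to
\[
f(yxy') \;=\; \widetilde{1_0}(y)\,\widetilde{1_0}(y')\,f(x), \qquad y, y' \in {}_k\Uidot,\; x \in {}_k\Udot,
\]
which, restricted to $f\in {}_k\mathbf{O}_{\le\lambda}$, says exactly that $f$ factors through the iterated coinvariant space ${}_k\bigl({}^\imath_\ast(\Udot/\Udot[\not\le\lambda])^\imath_\ast\bigr)$. Comparison with part (1), after the base change $\mathcal{A}\to k$, identifies this subspace with ${}_k\mathbf{O}(K\backslash G/K)_{\le\lambda}$; taking unions over $\lambda\in X^+$ yields the statement.

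The main technical point, which I would address first, is verifying that the iterated coinvariant ${}^\imath_\ast(\cdot)^\imath_\ast$ is a well-defined exact functor producing a based right $\Ui$-module (and, after dualizing, a based structure compatible with the filtration by $\lambda$). Since the left and right $\Ui$-actions commute (inherited from the $(\U,\U)$-bimodule structure on $\Udot$), and since Theorem~\ref{thm:Fbased} and Proposition~\ref{prop:fc}(2) admit symmetric right-hand versions thanks to Lemma~\ref{le:rb}, the two coinvariant functors commute up to canonical isomorphism, and their composition inherits a based structure and remains exact. With this compatibility in hand, the dualization and specialization arguments of \S\ref{sec:BKG} and \S\ref{sec:right} carry over essentially verbatim.
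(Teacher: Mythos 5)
The paper explicitly omits the proof, stating only that it is analogous to Theorems~\ref{thm:base} and \ref{thm:KG}, and the surrounding text in the spherical-functions subsection already sets up precisely the machinery you invoke (the iterated coinvariant ${}^\imath_\ast(\Udot/\Udot[\not\le\lambda])^\imath_\ast$ with basis ${}^\imath_\ast\RB[\le\lambda]^\imath_\ast$ via Lemma~\ref{le:rb} and the right variant of Theorem~\ref{thm:Fbased}). Your proof fleshes out the omitted argument along exactly the intended lines: identifying $\mathbf{O}(K\backslash G/K)_{\le\lambda}$ with the dual of the iterated coinvariant, obtaining basis-compatible embeddings across the $\lambda$-filtration from exactness, and for part (2) translating two-sided $K_k$-invariance into the condition of vanishing on $\agUi\,{}_k\Udot + {}_k\Udot\,\agUi$ via Theorem~\ref{thm:ik}, so it matches the paper's approach.
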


We call $\RB(K\backslash G /K)$ the canonical basis of $\mathbf{O}(K\backslash G /K) $.

\end{document}